\theoremstyle{plain}
\newtheorem{theorem}{Theorem}[section]
\newtheorem{lemma}[theorem]{Lemma}
\newtheorem{prop}[theorem]{Proposition}
\newtheorem*{lemma*}{Lemma}
\newtheorem*{task*}{Problem}
\newtheorem*{theorem*}{Theorem}
\theoremstyle{definition}
\newtheorem{remark}[theorem]{Remark}
\begin{document}
\title[On projective $3$-folds with two-dimensional space of vanishing
cycles]{On projective threefolds with two-dimensional space of vanishing
cycles}
\author{Timofey Fedorov}
\address{National Research University Higher School of Economics, Moscow, \mbox{Russia}.}
\email{tsfedorov@edu.hse.ru}
\subjclass{14D05}
\keywords{Picard--Lefschetz theory, vector bundle.}
\begin{abstract}
We obtain a complete list of smooth projective threefolds
over~$\mathbb C$ for which the dimension of the space of vanishing
cycles (in $H^2$ of the smooth hyperplane section) equals~$2$.
We also obtain a complete list of rank $2$ very ample vector bundles $E$ on smooth projective surfaces with $c_2(E) = 3$.
\end{abstract}
\maketitle
\section{Introduction}
Suppose that $X\subset \mathbb P^n$ is a smooth projective threefold
over~$\mathbb C$ and that $Y\subset X$ is its smooth hyperplane
section. Put $\mathrm{ev}(Y)=b_2(Y)-b_2(X)\ge 0$, where $b_2$ stands
for the second Betti number. The number $\mathrm{ev}(Y)$ is the
dimension of the space of vanishing cycles. According to \cite[Expos\'e XIX]{sga}, $\mathrm{ev}(Y)=0$ if and only if the dual of the threefold $X$ is not a hypersurface, and the classification of such threefolds is easily derived from \cite[Corollary 3.2]{Ein}
(it suffices to observe that, since $\dim Y$ is even, vanishing cycles are not homologous to zero, and that Lefschetz pencils do not contain singular fibers if and only if $X^*$ is not a hypersurface).
 There also exists a complete
classification of threefolds for which $\mathrm{ev}(Y)=1$
(see~\cite[Theorem 1.1]{Lvovski}). In this paper we treat the
next case $\mathrm{ev}(Y)=2$. It turns out that such varieties admit a
full classification as well.

We will say that $X$ is a scroll over a surface if
    there exists a surface $S$ with a very ample rank-$2$ bundle $E$ such that $X \cong \mathbb P(E)$ is embedded in $\mathbb P^n$ via tautological line bundle $\mathcal O_{\mathbb P(E)}(1)$;
    $X$ is a pencil of quadrics (also called hyperquadric fibration) if there exists a morphism $p\colon X \to C$,
    where $C$ is a smooth curve, such that the fiber of $p$ over a general
    point of $C$ is a smooth quadric (i.e., a smooth surface of degree $2$
    in $\mathbb P^n$).
The main result of the paper is
as follows.

\begin{theorem}
Suppose that $X\subset \mathbb P^n$ is a smooth projective threefold
over~$\mathbb C$ and that $Y\subset X$ is its smooth hyperplane
section. Then $\mathrm{ev}(Y)=2$ if and only if $X$ is one of the
following varieties,

\begin{enumerate}
    \item $X = \mathbb P_S(E)$ is a scroll over a surface and $(S, E)$ is one of the following:
    \begin{enumerate}
        \item $(S, E) \cong (\mathbb P^2, \mathcal O(1)\oplus \mathcal O(3))$;
        \item $(S, E) \cong (\mathbb P^2, T_{\mathbb P^2})$;
        \item $(S, E) \cong (\mathbb P^1 \times \mathbb P^1, \mathcal O_{\mathbb P^1 \times \mathbb P^1}(1,1)\oplus  \mathcal O_{\mathbb P^1 \times \mathbb P^1}(1,2))$;
        \item $(S, E) \cong (\mathbb F_1, [C_0 + 2f]^{\oplus 2})$;
        \item $(S, E) \cong (S, [-K_S]^{\oplus 2})$, where $S$ is a smooth cubic in $\mathbb P^3$;
    \end{enumerate}
    \item $X$ is a pencil of quadrics. Let $p\colon X \to C$ be the corresponding morphism. Then $C \cong \mathbb P^1$, $X$ is a divisor in the projective bundle $\mathcal W \coloneqq \mathbb P(\mathcal O_{\mathbb P^1}(2) \oplus \mathcal O_{\mathbb P^1}(2) \oplus \mathcal O_{\mathbb P^1}(1) \oplus \mathcal O_{\mathbb P^1}(1))$
    and $X$ is embedded via $i^*\mathcal O_{\mathcal W} (1)$ ($i \colon X \hookrightarrow W$ denotes the inclusion); all fibers of $p$ are smooth.
    We can associate to each such $X$ a global section of the vector bundle
    \begin{equation*}
        \begin{pmatrix}
            \mathcal O(-1)&\mathcal O(-1)&\mathcal O    &\mathcal O    \\
            \mathcal O(-1)&\mathcal O(-1)&\mathcal O    &\mathcal O    \\
            \mathcal O   &\mathcal O   &\mathcal O(1)&\mathcal O(1)\\
            \mathcal O   &\mathcal O   &\mathcal O(1)&\mathcal O(1)\\
        \end{pmatrix}
    \end{equation*}
    of rank $16$ such that its determinant is non-zero at any point $a \in \mathbb P^1$.
\end{enumerate}
\end{theorem}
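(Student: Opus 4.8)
The plan is to combine two ingredients: the Picard--Lefschetz computation of $\mathrm{ev}(Y)$ on the two relevant fibration types, and an adjunction-theoretic reduction forcing $X$ to be of one of those types. I begin with the two model computations that explain the shape of the answer. If $X = \mathbb P_S(E)$ is a scroll over a surface, a general member $Y \in |\mathcal O_{\mathbb P(E)}(1)|$ is cut out by a section $s \in H^0(S, E)$, and $Y \to S$ is the blow-up of $S$ at the $c_2(E)$ zeros of $s$; hence $b_2(Y) = b_2(S) + c_2(E)$ while $b_2(X) = b_2(S) + 1$, so $\mathrm{ev}(Y) = c_2(E) - 1$. Thus, among scrolls, $\mathrm{ev}(Y) = 2$ is exactly the condition $c_2(E) = 3$, which is why the theorem doubles as a classification of very ample rank-$2$ bundles with $c_2 = 3$. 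If instead $p \colon X \to C$ is a quadric fibration all of whose fibers are smooth, then $p$ is a smooth proper morphism and, by Ehresmann's theorem, a differentiable fiber bundle with fiber $\mathbb P^1 \times \mathbb P^1$; the Leray spectral sequence (with $R^1 p_* \mathbb Q = 0$) gives $b_2(X) = b_2(C) + \dim H^0(C, R^2 p_* \mathbb Q)$, while the smooth surface $Y$ is a conic bundle over $C$ with $b_2(Y) = 2 + s$, where $s$ is the number of fibers $Q_t$ to which the hyperplane is tangent. Forcing $\mathrm{ev}(Y) = 2$ then pins down $C \cong \mathbb P^1$ (so the monodromy is trivial and $b_2(X) = 3$) together with $s = 3$.

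The crux is the structural reduction: that $\mathrm{ev}(Y) = 2$ forces $X$ to be one of these two types. I would run the classification through the nef value $\tau = \tau(X, L)$ of the polarized pair, $L = \mathcal O_X(1)$. When $\tau > 2$ the pair is $(\mathbb P^3, \mathcal O(1))$, $(Q^3, \mathcal O(1))$, or a scroll over a curve, all of which have $\mathrm{ev}(Y) \le 1$ (the range covered by Ein and by \cite[Theorem 1.1]{Lvovski}), so these are excluded. When $\tau = 2$ the nef-value morphism $\phi \colon X \to Z$ exhibits $X$, according to $\dim Z$, as a scroll over a surface ($\dim Z = 2$), a quadric fibration over a curve ($\dim Z = 1$), or a del Pezzo threefold ($\dim Z = 0$); for the del Pezzo threefolds a direct count of $b_2(Y) - b_2(X)$ gives $\mathrm{ev}(Y) \neq 2$ except for $\mathbb P(T_{\mathbb P^2})$, which already appears as the scroll (1b). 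The genuine obstacle is the range $\tau < 2$, where $K_X + 2L$ is ample: here I must prove the lower bound $\mathrm{ev}(Y) \ge 3$. This is where positivity must be fed into Hodge theory, since by adjunction $K_Y + L|_Y = (K_X + 2L)|_Y$ is ample; the vanishing Hodge structure on $H^2(Y)$, whose $(2,0)$-part has dimension $h^0(K_Y) - h^0(\Omega^2_X)$, must then be shown to have total dimension at least three. Producing this bound uniformly, rather than case by case, is the technical heart of the proof.

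With the reduction in hand, the scroll case becomes the classification of very ample rank-$2$ bundles $E$ on a smooth surface $S$ with $c_2(E) = 3$. Here I would first constrain $S$: very ampleness of $E$, together with $\deg \mathbb P_S(E) = c_1(E)^2 - c_2(E) = c_1(E)^2 - 3$ being small, forces $S$ to have Kodaira dimension $-\infty$ or to be a del Pezzo surface of small degree, leaving the candidates $\mathbb P^2$, $\mathbb P^1 \times \mathbb P^1$, $\mathbb F_1$, and the cubic surface. On each such $S$ one then solves $c_2(E) = 3$ subject to very ampleness, using Riemann--Roch, the double-point formula, and the restriction of $E$ to curves to exclude unstable or non-very-ample solutions; this leaves precisely the pairs (1a)--(1e), each verified to be very ample with $c_2 = 3$ by a Chern class computation.

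For the quadric fibration case, the topology of the first paragraph already gives $C \cong \mathbb P^1$ and that every fiber is smooth. Grothendieck's theorem splits the rank-$4$ bundle defining the ambient $\mathbb P^3$-bundle as $\mathcal F = \bigoplus_{i=1}^4 \mathcal O_{\mathbb P^1}(a_i)$, and the relative quadratic form $q$ becomes a symmetric $4 \times 4$ matrix whose $(i,j)$ entry is a section of $\mathcal O_{\mathbb P^1}(3 - a_i - a_j)$. The requirements that $\det q$ be nowhere zero (all fibers smooth), that $i^* \mathcal O_{\mathcal W}(1)$ give the very ample embedding, and that the tangency count be $s = 3$ together force the splitting type $(a_1, \dots, a_4) = (2,2,1,1)$ and the displayed matrix of twisted line bundles, with the nowhere-vanishing determinant encoding fiberwise smoothness. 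This produces case (2) and completes the classification.
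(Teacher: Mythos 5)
Your proposal has the right two-case shape, but it leaves the actual theorem unproved at three places, and the most important one is exactly the step you yourself flag as ``the technical heart.'' Your reduction runs through the nef value $\tau(X,L)$, and for $\tau<2$ (i.e.\ $K_X+2L$ ample) you state that one ``must prove the lower bound $\mathrm{ev}(Y)\ge 3$'' by feeding positivity into the vanishing Hodge structure --- but no such argument is given, and producing it is the whole difficulty: nothing in your sketch explains why ampleness of $K_X+2L$ forces three independent vanishing cycles. The paper circumvents this entirely by a monodromy argument: if $\mathrm{ev}(Y)=2$, then $\mathrm{Ev}(Y)$ must lie in $H^{1,1}(Y)$ (otherwise $\mathrm{Ev}(Y)\subset H^{2,0}\oplus H^{0,2}$ and the intersection form would be semi-positive on the real points of $\mathrm{Ev}(Y)$, contradicting the existence of $(-2)$-classes of vanishing cycles), whence by \cite[Expos\'e XIX]{sga} the monodromy group is finite and equal to the Weyl group of a rank-$2$ simply laced irreducible root system, i.e.\ $S_3$. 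One then quotes the Sommese--Lvovski classification of threefolds with finite monodromy (Proposition \ref{sommese-lvovski}) and discards classes 3--7 using \cite{Lvovski}. Without either that lemma or a genuine proof of your $\mathrm{ev}(Y)\ge 3$ bound, your argument does not establish the ``only if'' direction.

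The two forward classifications are also materially incomplete. In the scroll case you assert that very ampleness of $E$ with $c_2(E)=3$ ``forces $S$ to have Kodaira dimension $-\infty$ or to be a del Pezzo surface,'' but this is unjustified and is precisely where the content lies: at the level of ample and spanned bundles, K3 surfaces, Enriques surfaces, and $\mathbb P^1$-bundles over elliptic curves genuinely occur with $c_2=3$ (cases (12)--(16) of Takahashi's list), and eliminating them requires dedicated arguments --- the paper first proves that $\det E$ is $2$-jet ample for any very ample rank-$2$ bundle (Theorem \ref{2-jet}, itself a nontrivial local computation), so that Takahashi's theorem applies, and then excludes K3/Enriques via the count $h^0(E)=h^0(\det E)-2$ combined with Betti numbers and Ottaviani's classification of scrolls in $\mathbb P^5$, and excludes the elliptic ruled cases via Gushel's non-very-ampleness results. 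In the quadric-fibration case, your claim that ``the topology of the first paragraph already gives $C\cong\mathbb P^1$'' fails: the genus of $C$ never enters $b_2(Y)-b_2(X)$, so the Betti-number bookkeeping cannot distinguish $\mathbb P^1$ from a higher-genus base. The paper instead derives the relations $2\deg E+4\deg\mathcal L=0$ and $2\deg E'+3\deg\mathcal L=3$ from the discriminant and from the fact that the induced pencil on $Y$ has exactly three degenerate fibers (the $A_2$ root system, via \cite[Proposition 5.13]{Lvovski}), concludes $\deg X=9$, and only then gets $C\cong\mathbb P^1$ from Livorni's classification of degree-nine varieties; smoothness of all fibers likewise rests on \cite[Propositions 5.12 and 5.13]{Lvovski}, not on Ehresmann-type topology. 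The subsequent exclusion of the splitting types with $d_1=0$ and $(1,1,1,3)$, which you compress into ``together force the splitting type,'' also requires the explicit degeneracy arguments given in the paper.
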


The proof is based on the following observation (see Lemma~\ref{finite_monodromy_lemma}
below): if $\mathrm{ev}(Y)=2$, then the monodromy group acting on
$H^2(Y,\mathbb R)$ is finite (and isomorphic to the symmetric
group~$S_3$). Using results of the paper~\cite{Lvovski}, where the
list of threefolds with finite monodromy group is read off the list of
threefolds with empty adjoint system from Sommese's
paper~\cite{Sommese}, and where monodromy groups are found for (almost
all) such varieties, as well as the classification results from the
paper \cite{Takahashi},
we extract from this list the varieties for which the monodromy group
is~$S_3$, which yields the result.

The paper is organized as follows. In Section~\ref{finite_monodromy} we prove that
the equation $\mathrm{ev}(Y)=2$ implies that the monodromy group is
isomorphic to $S_3$. We also show that, in the latter
case, either $(X,\mathcal O_X(1))\cong (\mathbb P(E),\mathcal O_{\mathbb P(E)|S}(1))$,
where $E$ is a very ample bundle of rank~$2$ over a
surface~$S$, or $X$ is a pencil of quadrics over a smooth curve. The
case of $\mathbb P^1$-scrolls is treated in Section~\ref{scrolls}, the
case of pencils of quadrics is treated in Section~\ref{pencils_of_quadrics}.

The study was implemented in the framework of the Basic Research Program at the National Research University Higher School of Economics (HSE University) in 2025.

I am grateful to my advisor Serge Lvovski for support and
guidance.

\section{Finite monodromy}
\label{finite_monodromy}
Let us define the space $\mathrm{Ev}(Y)\coloneqq (i^*H^2(X,\mathbb C))^\perp$, where $i^*\colon H^2(X,\mathbb C)\to H^2(Y,\mathbb C)$ is the restriction mapping and the orthogonal complement is taken with respect to the cup-product.
We will call $\mathrm{Ev}(Y)$ the space of vanishing cycles.
\begin{lemma}
\label{finite_monodromy_lemma}
Suppose a threefold $X\subset \mathbb P^n$ has $2$-dimensional space of vanishing cycles.
Then $\mathrm{Ev}(Y) \subset H^{1,1}(Y) \cap H^2(Y, \mathbb Q)$, and the monodromy group $G \subset \mathrm{GL}(\mathrm{Ev}(Y))$ is isomorphic to $S_3$.
\end{lemma}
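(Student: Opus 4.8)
The plan is to analyze $\mathrm{Ev}(Y)$ through the Picard--Lefschetz description of a Lefschetz pencil of hyperplane sections of $X$, whose generic member is isomorphic to $Y$. First I would recall that $\mathrm{Ev}(Y)$ is spanned over $\mathbb Q$ by the classes $\delta_1,\dots,\delta_N$ of the vanishing cycles at the critical points of the pencil, and that since $\dim Y=2\equiv 2\pmod 4$ each such class satisfies $\delta_i^2=-2$ and (as noted in the introduction) is not homologous to zero. Thus $\mathrm{Ev}(Y)$ comes from a rational lattice, $\mathrm{Ev}(Y)=\mathrm{Ev}(Y)_{\mathbb Q}\otimes\mathbb C$. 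To see it is in fact a sub-Hodge structure I would use that $i^*\colon H^2(X)\to H^2(Y)$ is an injective morphism of Hodge structures (Lefschetz hyperplane theorem), so its image is a weight-$2$ sub-Hodge structure; as the cup product is a polarization, the orthogonal complement $\mathrm{Ev}(Y)$ is again a weight-$2$ sub-Hodge structure, of dimension $2$. Note also that $\mathrm{Ev}(Y)$ lies in primitive cohomology, since each $\delta_i$ is orthogonal to $i^*H^2(X)$ and in particular to the hyperplane class $h_Y=i^*h_X$.

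For the first assertion I would argue by Hodge type. A rational weight-$2$ Hodge structure of dimension $2$ with $h^{2,0}=h^{0,2}$ admits only two patterns: either $h^{2,0}=1,\ h^{1,1}=0$, or $h^{2,0}=0,\ h^{1,1}=2$. By the Hodge--Riemann bilinear relations on primitive cohomology of a surface, the cup product is positive definite on the real points of $(H^{2,0}\oplus H^{0,2})_{\mathrm{prim}}$ and negative definite on $H^{1,1}_{\mathrm{prim}}$. In the first pattern $\mathrm{Ev}(Y)$ would coincide with the former and the cup product would be positive definite there, contradicting $\delta_1^2=-2$. Hence the second pattern holds, giving $\mathrm{Ev}(Y)_{\mathbb Q}\subset H^{1,1}(Y)\cap H^2(Y,\mathbb Q)$, and moreover the cup product is negative definite on $\mathrm{Ev}(Y)$.

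For the monodromy group $G=\langle T_{\delta_i}\rangle$ I would first observe that each $T_{\delta_i}(x)=x+(x\cdot\delta_i)\delta_i$ is an orthogonal reflection of the negative-definite plane $\mathrm{Ev}(Y)_{\mathbb R}$, so $G$ is a reflection subgroup of $O(2)$. Since $G$ preserves both this definite form and the rank-$2$ lattice $\mathrm{Ev}(Y)\cap H^2(Y,\mathbb Z)$, it is a discrete subgroup of the compact group $O(2)$, hence finite; being reflection-generated it is dihedral. The irreducibility of the monodromy action on vanishing cohomology (a standard feature of Lefschetz pencils) forbids the reducible cases, so its rotation part has order $m\ge 3$, and the crystallographic restriction leaves $m\in\{3,4,6\}$, i.e. the rank-$2$ root systems $A_2,B_2,G_2$. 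Because all vanishing cycles have the same square $-2$, picking two of them that generate $G$ gives a Gram matrix $\left(\begin{smallmatrix}-2&a\\ a&-2\end{smallmatrix}\right)$ with $a\in\mathbb Z$; negative-definiteness forces $|a|<2$ and irreducibility excludes $a=0$, so $a=\pm1$. This is exactly the $A_2$ configuration (the $45^\circ$ and $30^\circ$ angles of $B_2$ and $G_2$ cannot be realized by equal-length integral vectors), whose Weyl group is $S_3$; hence $G\cong S_3$.

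The hard part, and the two inputs on which the whole argument rests, will be the careful bookkeeping of signs in the Picard--Lefschetz formula and in the Hodge--Riemann relations (so that $\delta_i^2=-2$ and the definiteness statements are correctly oriented), together with a clean citation for the irreducibility of the monodromy representation on vanishing cohomology, which is what ultimately rules out the reducible dihedral groups and forces the $A_2$ root system rather than a mere sublattice of it.
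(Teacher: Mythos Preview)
Your proof is correct and, for the first assertion, essentially identical to the paper's: both show $\mathrm{Ev}(Y)$ is a sub-Hodge structure, observe that the alternative Hodge pattern $h^{2,0}=1$ would force the intersection form to be non-negative on the real part, and derive a contradiction from $\delta_i^2=-2$.

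For the monodromy statement the two arguments diverge. The paper simply invokes \cite[Expos\'e XIX]{sga}: finiteness of the monodromy is \emph{equivalent} to $\mathrm{Ev}(Y)\subset H^{1,1}(Y)$, and when finite the group is the Weyl group of an irreducible simply laced root system of rank $r=\mathrm{ev}(Y)$; since $A_2$ is the only such system in rank~$2$, $G\cong S_3$ follows immediately. Your route is the hands-on version of the same statement in rank~$2$: negative-definiteness plus the integral lattice force $G$ to be finite dihedral, and the equal-length, integral Gram constraints pin down $A_2$. This is more self-contained and avoids the SGA citation, at the cost of being specific to rank~$2$.

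One point to tighten: the step ``picking two of them that generate $G$'' is not automatic, and your use of irreducibility to exclude $a=0$ tacitly assumes it. A clean fix is to note that \emph{any} two non-proportional vanishing cycles have Gram matrix $\left(\begin{smallmatrix}-2&a\\a&-2\end{smallmatrix}\right)$ with $a\in\{0,\pm1\}$ by definiteness; if every pair had $a=0$ there would be only two axes and $G$ would be $D_2$, contradicting irreducibility, so some pair has $a=\pm1$ and spans an $A_2$ sublattice. Since no further vanishing cycle can sit at $30^\circ$ or $45^\circ$ to these (that would force a non-integral inner product between equal-length integral vectors), all $\delta_i$ lie on the three $A_2$ axes and $G=S_3$. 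Alternatively, cite the conjugacy of vanishing cycles (same source as irreducibility) to see directly that they form a single $W$-orbit of equal-length roots.
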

\begin{proof}
    According to \cite[Expos\'e XIX]{sga}, the monodromy group acting on $H^2(Y,\mathbb R)$ is finite if and only if $\mathrm{Ev}(Y)$ is contained in $H^{1,1}(Y)$, and if it is finite and $\mathrm{ev}(Y)=r >0$,
    then this group is isomorphic to the Weyl group of a rank $r$  simply laced irreducible root system.
    It is clear that $\mathrm{Ev}(Y)=\bigoplus_{p+q=2}(\mathrm{Ev}(Y)\cap H^{p,q}(Y))$.
    If, arguing by contradiction, $\mathrm{ev}(Y)=2$ but $\mathrm{Ev}(Y)$ is not contained in $H^{1,1}(Y)$,
    then $\mathrm{Ev}(Y)\subset H^{2,0}(Y)\oplus H^{0,2}(Y)$, whence $\int _Y(\omega\wedge\omega)\geq0$ for any $2$-form $\omega$ representing an element of $\mathrm{Ev}(Y)\cap H^2(Y,\mathbb R)$.
    Since the latter space is spanned by fundamental classes of vanishing cycles, which have self-intersection index~$-2$, we arrive at a contradiction.
\end{proof}
According to Proposition 4.2 in \cite{Lvovski}, threefolds with finite and non-trivial monodromy group are as follows:
\begin{prop}[A. Sommese, S. Lvovski]
    \label{sommese-lvovski}
    Suppose that $X \subset \mathbb P^N$ is a smooth threefold such that $X^*$
 is a hypersurface (equivalently, the monodromy group of $X$ is non-trivial).
 Then the monodromy group of $X$ is finite if and only if $X$ is one of the
 varieties listed below:
    \begin{enumerate}
        \item $X$ is a scroll over a surface, that is, there exists a locally free
        sheaf $E$ of rank $2$ over a smooth surface $S$ such that $(X,\mathcal O_X(1)) \cong (\mathbb P(E),\mathcal O_{\mathbb P(E)}(1))$
        \item $X$ is a pencil of quadrics, that is, there exists a morphism $p\colon X \to C$,
        where $C$ is a smooth curve, such that the fiber of $p$ over a general
        point of $C$ is a smooth quadric (i.e., a smooth surface of degree $2$
        in $\mathbb P^N$).
        \item $X$ is a Veronese pencil, that is, there exists a morphism $p\colon X \to C$, where $C$ is a smooth curve, such that, for a general point $a \in C$,
        the fiber $X_a = p^{-1}(a)$ is a smooth surface and $(X_a,\mathcal O_{X_a}(1)) \cong (\mathbb P^2, \mathcal O(2))$
        \item $X$ is a Del Pezzo threefold, i.e., a Fano variety embedded by one
        half of the anticanonical class, that is, $\omega_X \cong \mathcal O_X(-2)$
        \item $X$ is the smooth quadric in $\mathbb P^4$
        \item $X$ is the Veronese image $v_2(Q) \subset \mathbb P^{13}$ or its isomorphic projection.
        \item $X\subset \mathbb P^n$ is the blowup of the
        smooth three-dimensional quadric $Q$ at $k \geq 1$ points, and $\mathcal O_X(1) \cong \mathcal O_X(2\sigma^*H - E_1 - \dots - E_k)$,
        where $\sigma \colon X \to Q$ is the blowdown
        morphism, $H$ is a hyperplane section of $Q$, and $E_1,\dots ,E_k \subset X$ are
        exceptional divisors.
    \end{enumerate}
\end{prop}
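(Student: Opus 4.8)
The plan is to convert finiteness of the monodromy into a cohomological condition on the adjoint system of $(X,\mathcal O_X(1))$ and then to quote Sommese's classification of the threefolds satisfying that condition. Throughout we assume, as in the statement, that $X^{*}$ is a hypersurface, so that the monodromy is nontrivial; the task is to decide when it is moreover finite. By \cite[Expos\'e XIX]{sga}, together with the argument already used in Lemma~\ref{finite_monodromy_lemma}, finiteness of the monodromy on $H^2(Y,\mathbb R)$ is equivalent to the purely Hodge-theoretic requirement $\mathrm{Ev}(Y)\subset H^{1,1}(Y)$. So I would first turn this requirement into an explicit numerical condition, and only afterwards match it against a known list of varieties.

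For the translation, recall that $i^*H^2(X,\mathbb C)$ and $\mathrm{Ev}(Y)$ are complementary sub-Hodge structures of $H^2(Y,\mathbb C)$ and that $i^*$ is injective by the Lefschetz hyperplane theorem; taking $(2,0)$-parts gives $h^{2,0}(Y)=h^{2,0}(X)+\dim\bigl(\mathrm{Ev}(Y)\cap H^{2,0}(Y)\bigr)$. Hence $\mathrm{Ev}(Y)\subset H^{1,1}(Y)$ is equivalent to $p_g(Y)=h^{2,0}(X)$. Feeding this into the long exact sequence of
\begin{equation*}
0\to \omega_X\to \omega_X\otimes\mathcal O_X(1)\to \omega_Y\to 0,
\end{equation*}
where $\omega_Y=(\omega_X\otimes\mathcal O_X(1))|_Y$ by adjunction and $H^1(X,\omega_X)\cong H^2(X,\mathcal O_X)^{\vee}$ has dimension $h^{2,0}(X)$ by Serre duality, I expect the condition $p_g(Y)=h^{2,0}(X)$ to reduce to the equality $H^0(X,\omega_X\otimes\mathcal O_X(1))=H^0(X,\omega_X)$, the connecting map $H^0(Y,\omega_Y)\to H^1(X,\omega_X)$ then being forced to be an isomorphism. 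In all cases at hand $p_g(X)=0$, so this reads simply as emptiness of the adjoint system $|K_X+\mathcal O_X(1)|$. In short, among threefolds whose dual is a hypersurface, finite monodromy is equivalent to an empty adjoint system.

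The final step is to quote the classification. Sommese's paper \cite{Sommese} classifies smooth projective threefolds with empty adjoint system, and the seven families in the statement are exactly those members of his list whose dual variety is a hypersurface, i.e. whose monodromy is nontrivial (this is what excludes, for instance, $(\mathbb P^3,\mathcal O(1))$, which has empty adjoint system but no singular hyperplane sections). This matching, carried out in \cite{Lvovski}, is the main obstacle: one must read Sommese's geometric description correctly, discard the varieties with trivial monodromy, and --- for the converse direction --- confirm for each remaining family that $|K_X+\mathcal O_X(1)|=\varnothing$, so that $\mathrm{Ev}(Y)\subset H^{1,1}(Y)$ and the monodromy is a genuine, finite Weyl group. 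For the listed families the general hyperplane section is rational or otherwise has $p_g$ entirely accounted for by $X$, so this last verification is routine once the list is in hand.
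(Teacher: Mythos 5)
First, a point of calibration: the paper never proves this proposition at all. It is stated as a quoted result, taken from Proposition 4.2 of \cite{Lvovski} (hence the attribution to Sommese and Lvovski), and the paper's introduction only summarizes the strategy of that cited proof: finite monodromy is equivalent to emptiness of the adjoint system $|K_X+\mathcal O_X(1)|$, and threefolds with empty adjoint system are read off from Sommese's classification \cite{Sommese}. Your proposal reconstructs precisely this strategy, and your final appeal to \cite{Sommese} and \cite{Lvovski} for the matching of the two lists (and for the family-by-family verification of the converse) is the same level of deferral as the paper's own citation. So in outline you agree with the source the paper relies on, and your Hodge-theoretic translation (finiteness $\Leftrightarrow$ $\mathrm{Ev}(Y)\subset H^{1,1}(Y)$ $\Leftrightarrow$ $p_g(Y)=h^{2,0}(X)$, via the orthogonal decomposition $H^2(Y,\mathbb C)=i^*H^2(X,\mathbb C)\oplus\mathrm{Ev}(Y)$ of Hodge structures) is the correct bridge.

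However, one step of your reduction is genuinely broken as written. From the sequence $0\to\omega_X\to\omega_X\otimes\mathcal O_X(1)\to\omega_Y\to 0$, Kodaira vanishing and Serre duality, you correctly reduce $\mathrm{Ev}(Y)\subset H^{1,1}(Y)$ to the equality $h^0(\omega_X\otimes\mathcal O_X(1))=h^0(\omega_X)$. But to conclude that this equality means $|K_X+\mathcal O_X(1)|=\varnothing$ you assert that ``in all cases at hand $p_g(X)=0$''. In the forward direction of the proposition you do not yet know which variety $X$ is --- that is exactly what is being proved --- so ``the cases at hand'' is circular, and $h^0(\omega_X\otimes\mathcal O_X(1))=h^0(\omega_X)>0$ is not a priori absurd. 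The fix is intrinsic and short: the equality says precisely that the restriction map $H^0(\omega_X\otimes\mathcal O_X(1))\to H^0(\omega_Y)$ is zero, i.e.\ every section of $K_X+\mathcal O_X(1)$ vanishes on $Y$. Since finiteness of the monodromy is a property of the whole family of smooth hyperplane sections, this holds for every smooth $Y$, and smooth hyperplane sections sweep out $X$ (through any point of $X$ the general hyperplane section is smooth); hence every section of $K_X+\mathcal O_X(1)$ vanishes identically, so $|K_X+\mathcal O_X(1)|=\varnothing$, and $p_g(X)=0$ comes out as a consequence rather than an input. With that repair, your sketch is a faithful reconstruction of the argument behind the cited Proposition 4.2 of \cite{Lvovski}, though the substantial classification content remains, for you as for the paper, a citation.
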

It follows from \cite{Lvovski} that in the classes 3-7 only one variety has two-dimensional space of vanishing cycles: this is the Del Pezzo threefold $\mathbb P(E)$, $E = T_{\mathbb P^2}$, embedded by the complete linear system $|\mathcal O_{\mathbb P(E)}(1)|$ (this variety belongs to the class~1 as well).

Hence, all threefolds that we are interested in belong to the classes $1$ and $2$.

\section{Threefold scrolls over surfaces}
\label{scrolls}
\subsection{Reduction to Takahashi's classification}
\label{reduction_takahashi}
Let us fix notation. Let $S$ be a smooth projective surface, $E$ be a rank $2$ very ample bundle, $X$ be the corresponding projective bundle $\mathbb P(E)$ with tautological line bundle $\mathcal O(1)$ and $\pi \colon \mathbb P(E) \to S$ be the projection.
Put $n =\mathrm {dim}\, \mathbb P(H^0(S, E)) = \mathrm {dim}\, \mathbb P(H^0(X, \mathcal O(1)))$
so $X$ is embedded in $\mathbb P(H^0(X, \mathcal O(1))) = \mathbb P^n$ via $\mathcal O(1)$; denote by $s_y$ the fiber of projection $\pi$ above the point $y \in S$
(note that the fibers $s_y, y \in S$ are actually lines in $\mathbb P^n$). Denote by $H_{\alpha}$ the hyperplane section corresponding to the (non-zero) section $\alpha \in H^0(E)$.
We can choose a smooth hyperplane section $Y \subset X$ containing exactly $c_2(E)$ different fibers of projection $\pi$.
Then $Y$ is isomorphic to the blow-up of $S$ at $c_2(E)$ different points.
Since $b_2(X) = b_2(Y)+1$, we have $\mathrm{ev}(Y) = c_2(E) - 1$, so $c_2(E) = 3$.
The problem of classifying very ample and, more generally, ample rank-$2$ bundles $E$ on surfaces with small $c_2(E)$ was considered by many authors; the paper \cite{Noma1994} contains classification of all such pairs with ample and globally generated $E$ and $c_2(E)\leq 2$.
We use the classification result of Takahashi \cite{Takahashi} to extend classification of very ample rank 2 bundles on surfaces to the case $c_2(E) = 3$.

We need one definition to present the main result of \cite{Takahashi}. A line bundle $L$ on a scheme $X$ is called $k$-jet ample if for any choice of distinct points $x_1, \dots, x_r$ and positive integers $k_1, \dots, k_r$ with $\sum_{i=1}^r k_i = k+1$ the natural map
\[
H^0(L) \to H^0\left(L \otimes \left(\frac{\mathcal O_{x_1}}{\mathfrak{m}_{x_1}^{k_1}} \oplus \dots \oplus \frac{\mathcal O_{x_r}}{\mathfrak{m}_{x_r}^{k_r}}\right)\right)
\]
is surjective (note that \textit{$1$-jet ampleness} is equivalent to \textit{very ampleness}).
It is proved in \cite{Takahashi} that for an ample rank $2$ bundle $E$ with $k$-jet ample determinant bundle $\mathrm{det}\, E$ the inequality $c_2(E) \geq k-1$ holds;
a complete list of rank $2$ ample bundles with $k$-jet ample determinant satisfying $k-1 \leq c_2(E) \leq k+1$ is obtained in this paper (\cite[Theorem 1.1]{Takahashi}).
Theorem \ref{2-jet} says that for a very ample rank $2$ bundle $E$ on a surface its determinant $\mathrm{det}(E)$ is $2$-jet ample.
Our classification of very ample bundles with $c_2(E) = 3$ will follow from \cite[Theorem 1.1]{Takahashi}, once one has extracted the very ample bundles from this list.
Here is the part of this theorem:
\begin{theorem*}[\cite{Takahashi}]
    \label{Takahashi}
    Let $S$ be a smooth connected projective surface and $E$ an ample
    and spanned vector bundle of rank $2$ on $S$. Assume that $\mathrm{det}\, E$ is $k$-jet ample for
    $k \geq 1$. Then $c_2(E) = k+1$ if and only if $(S, E)$ is one of the following:
        \begin{enumerate}
            \item $(S, E)\cong (\mathbb P^2, \mathcal O_{\mathbb P^2}\oplus \mathcal O_{\mathbb P^2}(k+1))$;
            \item $k=2$ and $(S, E) \cong (\mathbb P^2, T_{\mathbb P^2})$;
            \item $k = 3$ and $(S, E)\cong (\mathbb P^2, \mathcal O_{\mathbb P^2}(2)^{\oplus 2})$;
            \item $k=4$, $S\cong \mathbb P^2$, $E$ is semistable, but not stable, and there is an exact sequence $0 \to \mathcal O_{\mathbb P^2}(2)\to E \to \mathcal I_x(1)\to 0$, where $x$ is a point of $\mathbb P^2$ and $\mathcal I_x$ is the ideal sheaf of the $0$-dimensional subscheme $\{x\}$;
            \item $k = 5$ and $(S, E) \cong (\mathbb P^2, \mathcal O_{\mathbb P^2}(2)\oplus \mathcal O_{\mathbb P^2}(3))$;
            \item $k = 1$ and $(S, E) \cong (\mathbb P^1 \times \mathbb P^1, \mathcal O_{\mathbb P^1 \times \mathbb P^1}(1,1)^{\oplus 2} )$;
            \item $k = 2$ and $(S, E) \cong (\mathbb P^1 \times \mathbb P^1, \mathcal O_{\mathbb P^1 \times \mathbb P^1}(1,1)\oplus  \mathcal O_{\mathbb P^1 \times \mathbb P^1}(1,2))$;
            \item $k = 3$ and $(S, E) \cong (\mathbb P^1 \times \mathbb P^1, \mathcal O_{\mathbb P^1 \times \mathbb P^1}(1,1)\oplus  \mathcal O_{\mathbb P^1 \times \mathbb P^1}(2,2))$;
            \item $k = 2$ and $(S, E) \cong (\mathbb F_1, [C_0 + 2f]^{\oplus 2})$;
            \item $k = 1$ and $(S, E) \cong (Bl_7(\mathbb P^2), [-K_S]^{\oplus 2})$ (here and below $\mathrm{Bl}_j(S)$ is the surface obtained by blowing up $S$ at $j$
            points in general position);
            \item $k = 2$ and $(S, E) \cong (Bl_6(\mathbb P^2), [-K_S]^{\oplus 2})$;
            \item $k = 1$, $p \colon S \to C$ is a $\mathbb P^1$-bundle over an elliptic curve $C$ with invariant $e = -1$, and $E \cong p^*(\mathcal E)\otimes [C_0]$, where $\mathcal E$ is an indecomposable rank-$2$ vector bundle on $C$ of degree $1$;
            \item $k = 2$, $S$ is a $\mathbb P^1$-bundle over an elliptic curve $C$ with invariant $e = -1$, and $E \cong [C_0 + f]^{\oplus 2}$;
            \item $k = 2$, $p \colon S \to C$ is a $\mathbb P^1$-bundle over an elliptic curve $C$ with invariant $e = -1$, and $E \cong p^*(\mathcal E)\otimes [C_0]$, where $\mathcal E$ is an indecomposable rank-$2$ vector bundle on $C$ of degree $2$;
            \item $k = 2$, $S$ is a $K3$ surface, and $c_1(E)^2 = 10$, or $12$;
            \item $k = 2$, $S$ is an Enriques surface, and $c_1(E)^2 = 12$;
        \end{enumerate}
    Here $C_0$ is a section of minimal self-intersection $C_0^2 = -e$, and $f$ is a fiber of the ruling.
\end{theorem*}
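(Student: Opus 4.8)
The plan is to treat the two implications separately. The \emph{if} direction is a finite verification: for each of the listed pairs $(S,E)$ one computes $c_2(E)$ and checks that $\det E$ is $k$-jet ample for the asserted value of $k$. For a split bundle $E = L_1\oplus L_2$ one has $c_2(E) = L_1\cdot L_2$ and $\det E = L_1\otimes L_2$, and the $k$-jet ampleness of a line bundle on $\mathbb P^2$, on $\mathbb P^1\times\mathbb P^1$, or on a rational ruled surface is read off from the standard numerical criteria; the indecomposable cases $T_{\mathbb P^2}$, the elliptic-scroll bundles, and the semistable-nonstable bundle of item (4) are checked individually. Once this is done, the realized pairs all sit at the boundary $c_2(E)=k+1$ of the inequality $c_2(E)\ge k-1$, which is the content of the ``if'' assertion.

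The substance is the \emph{only if} direction. I would fix a general global section $s\in H^0(E)$; since $E$ is ample and spanned, its zero scheme $Z = (s)_0$ is a reduced set of $c_2(E) = k+1$ distinct points, and one has the Serre/Koszul sequence
\[
0 \to \mathcal O_S \to E \to \mathcal I_Z\otimes\det E \to 0 .
\]
Writing $L = \det E$, this records $c_1(E) = L$ and $c_2(E) = \operatorname{length} Z = k+1$, while the extension is governed by the Cayley--Bacharach property of $Z$ with respect to $K_S + L$. The strategy is then to use the strong positivity of $L$ to pin down the polarized surface $(S,L)$, and afterwards to reconstruct $E$ from the admissible extensions over a length-$(k+1)$ Cayley--Bacharach scheme.

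The positivity input is that $k$-jet ampleness of $L$ forces $L\cdot C \ge 2g(C)+k$ for every irreducible curve $C$ and controls the adjoint bundle $K_S + L$, since $k$-jet ampleness implies $k$-very ampleness and the corresponding spannedness of adjoints. Feeding this into Riemann--Roch for $E$, namely $\chi(E) = 2\chi(\mathcal O_S) + \tfrac12(L^2 - L\cdot K_S) - (k+1)$, together with the constraint $c_2(E)=k+1$, bounds the sectional genus $g = 1 + \tfrac12(L^2 + L\cdot K_S)$ and the degree $L^2$ from above. This reduces $(S,L)$ to the short list of polarized surfaces of small degree and sectional genus furnished by Sommese's adjunction classification: $\mathbb P^2$ with $\mathcal O(d)$, $\mathbb P^1\times\mathbb P^1$, the rational ruled surfaces, the (weak) Del Pezzo surfaces, and finitely many $K3$, Enriques, and elliptic-scroll pairs. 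For each such $(S,L)$ I would enumerate the ample and spanned rank-$2$ bundles $E$ with $\det E = L$ and $c_2(E)=k+1$: when $E$ splits this is a Diophantine matching of $(L_1\cdot L_2,\ L_1+L_2)$ against the list, and when $E$ is indecomposable one analyzes the admissible extensions through the Cayley--Bacharach condition, using Bogomolov-type (semi)stability to isolate the bundles such as $T_{\mathbb P^2}$ and the elliptic-curve examples.

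The main obstacle is the indecomposable part of the classification and the surfaces of Kodaira dimension $\ge 0$. For $S = \mathbb P^2$, $\mathbb P^1\times\mathbb P^1$, and the rational ruled surfaces the positivity bounds are tight enough that splitting arguments essentially finish the job, but the semistable-not-stable bundle of item (4) and the elliptic-scroll bundles of items (12)--(14) cannot be reduced to line-bundle computations: there one must combine the precise numerical constraints from $k$-jet ampleness with the structure theory of (semi)stable rank-$2$ bundles and with the geometry of $\mathbb P^1$-bundles over an elliptic curve of invariant $e=-1$. Likewise, showing that the $K3$ and Enriques cases occur \emph{only} for the stated values $c_1(E)^2 = 10,\,12$ requires a sharp Cayley--Bacharach count controlling how the $k+1$ points of $Z$ impose conditions on $|K_S+L|$; and verifying the exact $k$-value in each configuration (distinguishing, for instance, $k=1,2,3$ among the $\mathbb P^1\times\mathbb P^1$ pairs) is the delicate bookkeeping that closes the argument.
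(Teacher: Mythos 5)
First, a point of comparison that matters for this review: the paper does not prove this statement at all. It is quoted verbatim, with the bracketed citation, from Takahashi's paper (\cite[Theorem 1.1]{Takahashi}) and is used in the present paper purely as a black box, from which the very ample bundles with $c_2(E)=3$ are later extracted (Theorem \ref{classification3}). So there is no internal proof to measure your argument against; the question is whether your sketch would stand on its own as a proof of Takahashi's classification. It would not.

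Your outline does name the right ingredients --- the Serre/Koszul construction from a general section, the Cayley--Bacharach condition governing the extension, and a reduction to adjunction-theoretic classifications of polarized surfaces --- and this is indeed the general strategy of the Noma--Takahashi line of work. But the decisive steps are precisely the ones you defer. Concretely: (a) your mechanism for pinning down $(S,L)$ does not function as stated. Riemann--Roch for $E$ is an identity, not an inequality, and $k$-jet ampleness of $L$ produces \emph{lower} bounds on intersection numbers ($L\cdot C\ge k$, etc.), so the combination cannot bound $L^2$ or the sectional genus \emph{from above}; the actual reduction in the literature comes from the failure of the points of $Z$ to impose independent conditions on the adjoint system (Cayley--Bacharach), which forces $(S,\det E)$ into Sommese's special adjunction classes. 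You mention Cayley--Bacharach but never derive any such constraint from it. (b) The ``only if'' direction's hard cases --- item (4), the elliptic-scroll items (12)--(14), and the $K3$/Enriques values $c_1(E)^2=10,12$ --- are explicitly labeled as obstacles and left unresolved; but these \emph{are} the theorem. (c) Even the ``if'' direction is not the ``finite verification'' you claim: items (15) and (16) specify $(S,E)$ only by numerical invariants, not by an explicit bundle, so there is nothing concrete to compute $c_2$ of --- indeed the present paper remarks that it is not even known whether ample bundles with these invariants exist --- and verifying that each listed pair sits at exactly $c_2(E)=k+1$ for the stated $k$ presupposes the inequality $c_2(E)\ge k-1$ and the precise jet-ampleness order of each determinant, neither of which you establish. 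As it stands, your proposal is a plausible research plan that reproduces the table of contents of Takahashi's proof, not the proof itself.
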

We will refer to this list as Takahashi's list.

We do not know whether \textit{ample} bundles in the cases $(15)$ and $(16)$ exist or not but we show in this paper that these bundles cannot be \textit{very ample} (see Section \ref{no_k3}).
Theorem \ref{2-jet} says that for a very ample rank $2$ bundle $E$ on a surface its determinant $\mathrm{det}(E)$ is $2$-jet ample.
Our classification of very ample bundles with $c_2(E) = 3$ follows from Takahashi's result, one needs only to extract very ample bundles from the list.
To that end, we have
\begin{theorem}
    \label{classification3}
    Let $E$ be a very ample rank-$2$ bundle on a smooth projective surface $S$ with $c_2(E) = 3$ over the complex number field.
    Then $(S, E)$ is one of the following:
    \begin{enumerate}
        \item $(S, E) \cong (\mathbb P^2, \mathcal O(1)\oplus \mathcal O(3))$;
        \item $(S, E) \cong (\mathbb P^2, T_{\mathbb P^2})$;
        \item $(S, E) \cong (\mathbb P^1 \times \mathbb P^1, \mathcal O_{\mathbb P^1 \times \mathbb P^1}(1,1)\oplus  \mathcal O_{\mathbb P^1 \times \mathbb P^1}(1,2))$;
        \item $(S, E) \cong (\mathbb F_1, [C_0 + 2f]^{\oplus 2})$;
        \item $(S, E) \cong (S, [-K_S]^{\oplus 2})$, where $S$ is a smooth cubic in $\mathbb P^3$;
    \end{enumerate}
\end{theorem}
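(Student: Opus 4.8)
The plan is to feed the hypotheses into Takahashi's list and then test very ampleness entry by entry. Since $E$ is very ample it is, in particular, ample and globally generated, and by Theorem~\ref{2-jet} its determinant $\det E$ is $2$-jet ample. I would therefore apply Takahashi's theorem with $k=2$: because $c_2(E)=3=k+1$, the pair $(S,E)$ must occur among the entries of Takahashi's list having $k=2$, that is, among items (1), (2), (7), (9), (11), (13), (14), (15) and (16), where item (1) supplies the pair $(\mathbb P^2,\mathcal O(1)\oplus\mathcal O(3))$ of the statement. Everything then reduces to separating the very ample pairs from the ones that are only ample and spanned.

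For the surviving five cases I would verify very ampleness directly. The key elementary fact is that a decomposable rank-$2$ bundle $L_1\oplus L_2$ is very ample if and only if both $L_1$ and $L_2$ are very ample; equivalently $\mathbb P(L\oplus L)\cong S\times\mathbb P^1$ is embedded by a Segre-type system, which is an embedding exactly when $L$ is very ample. Using this, item (1) $\mathcal O(1)\oplus\mathcal O(3)$, item (7) $\mathcal O(1,1)\oplus\mathcal O(1,2)$, item (9) $[C_0+2f]^{\oplus2}$ (with $C_0+2f$ the very ample class on $\mathbb F_1$ realizing the cubic scroll), and item (11) $[-K_S]^{\oplus2}$ (with $S=\mathrm{Bl}_6(\mathbb P^2)$ the smooth cubic surface, on which $-K_S$ is very ample) are all very ample; together with the classically very ample bundle $T_{\mathbb P^2}$ in item (2), these give precisely the five pairs listed in the statement.

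It then remains to eliminate items (13), (14), (15), (16). For the two elliptic-ruled cases (13) and (14), where $S$ is ruled over an elliptic curve $C$ with invariant $e=-1$ and minimal section $C_0\cong C$, I would restrict $E$ to $C_0$. In (13) one gets $E|_{C_0}\cong M^{\oplus2}$ with $\deg M=(C_0+f)\cdot C_0=2$, while in (14) one gets $E|_{C_0}\cong\mathcal E\otimes N$ with $\deg N=C_0^2=1$; since $\mathcal E$ is indecomposable of rank $2$ and degree $2$ it is strictly semistable (not stable, as $\gcd(2,2)\neq1$), hence admits a quotient line bundle of degree equal to its slope $1$, so twisting by $N$ yields a degree-$2$ quotient of $E|_{C_0}$. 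In either situation $E|_{C_0}$ has a quotient line bundle $L$ of degree $2$, and the corresponding section of $\pi$ over $C_0$ is a curve $\Gamma\cong C$ on which $\mathcal O_{\mathbb P(E)}(1)$ restricts to $L$. A degree-$2$ line bundle on an elliptic curve is not very ample, so $\mathcal O_{\mathbb P(E)}(1)$ cannot be very ample and these two cases are excluded.

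The cases (15) and (16), where $S$ is a $K3$ or an Enriques surface, carry no ruling, so the restriction trick is unavailable; for them I would invoke the separate argument of Section~\ref{no_k3}, which shows that no very ample bundle can occur (indeed one does not even know whether the ample examples exist). I expect this $K3$/Enriques exclusion, together with the non-split elliptic bundle in (14), to be the main obstacle: unlike the split cases, these require genuine input about stability and about the geometry of the putative scroll rather than a one-line numerical check. Once they are disposed of, the five very ample pairs identified above are exactly those in the statement, completing the classification.
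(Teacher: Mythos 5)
Your skeleton is the same as the paper's: very ample implies ample and spanned, Theorem~\ref{2-jet} makes $\det E$ $2$-jet ample, Takahashi's theorem with $k=2$ and $c_2(E)=3=k+1$ cuts the possibilities down to the $k=2$ entries of the list, and Proposition~\ref{split} (plus the classical very ampleness of $T_{\mathbb P^2}$) confirms the five surviving pairs. (Your identification of entry (1) with $(\mathbb P^2,\mathcal O(1)\oplus\mathcal O(3))$ is the right reading: as transcribed in the paper that entry would have $c_2=0$, so it must be understood as $\mathcal O(1)\oplus\mathcal O(k+1)$.) Where you genuinely diverge is the elimination of the elliptic ruled cases (13) and (14). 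The paper relies on Gushel ($C_0+f$ is not very ample; $h^1(C_0+f)=0$, so in the non-split case (14) one still has $H^0(E)\cong H^0(C_0+f)^{\oplus 2}$ and the argument of Proposition~\ref{split} applies), whereas you restrict $E$ to the minimal section $C_0\cong C$ and produce a degree-$2$ line bundle quotient of $E|_{C_0}$: directly in case (13), and in case (14) via Atiyah's classification (an indecomposable rank-$2$, degree-$2$ bundle on an elliptic curve is semistable but not stable, hence has a slope-$1$ quotient). The section $\Gamma\cong C$ of $\mathbb P(E)|_{C_0}$ corresponding to that quotient carries $\mathcal O_{\mathbb P(E)}(1)|_\Gamma$ of degree $2$, which is never very ample on an elliptic curve. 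This argument is correct and more self-contained than the paper's citations; it is a genuine alternative for these two cases.

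There is, however, a real gap: you never eliminate cases (15) and (16) (K3 and Enriques); you only point to Section~\ref{no_k3}, i.e.\ to the very part of the paper's proof you were supposed to reconstruct, and, as you yourself note, your restriction trick has no analogue there. This exclusion is a substantive piece of the argument and nothing in your proposal replaces it. The paper's route: the Koszul complex of a section of $E$ with three simple zeros, together with $q(S)=0$ and the $2$-jet ampleness of $\det E$ (which gives surjectivity of $H^0(\det E)\to H^0(\det E\otimes\mathcal O_\eta)$ for the three-point scheme $\eta$), yields $h^0(E)=h^0(\det E)-2$; Riemann--Roch then forces $h^0(E)\in\{5,6\}$ in case (15) and $h^0(E)=5$ in case (16); the value $5$ is impossible because $\mathbb P(E)$ would be a smooth hypersurface in $\mathbb P^4$, whose second Betti number is $1<b_2(S)+1$; and the value $6$ is ruled out by Ottaviani's classification of three-dimensional scrolls in $\mathbb P^5$, in which the unique K3 scroll of degree $9$ arises from a bundle with $c_2=5$. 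Until you supply an argument of this kind, the classification is not proved.
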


(The pairs $(S, E)$ with $E$ very ample and $c_2(E) < 3$ are listed in \cite[Theorem 11.1.3 and Theorem 11.4.5]{adjunction_theory}.)

\subsection{$2$-jet ampleness of the determinant bundle}
\label{2jetsection}
In this subsection we work over an arbitrary algebraically closed field $\Bbbk$.
\begin{theorem}
    \label{2-jet}
    Suppose that $E$ is a very ample rank-$2$ bundle on a surface $S$.
    Then $\mathrm{det}\, E$ is $2$-jet ample.
\end{theorem}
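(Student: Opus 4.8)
The plan is to verify $2$-jet ampleness of $\det E$ directly from the definition, checking the three kinds of length-$3$ schemes separately: (i) three distinct points $y_1,y_2,y_3$; (ii) a tangent vector at a point $y_1$ together with a point $y_2$; and (iii) the full $2$-jet $\mathcal O_{y_0}/\mathfrak m_{y_0}^3$ at a single point $y_0$. Throughout I would work only with the decomposable sections $\alpha\wedge\beta\in H^0(\det E)$, where $\alpha,\beta\in H^0(E)$: since $k$-jet ampleness only requires that, for each such scheme $Z$, some subspace of $H^0(\det E)$ surjects onto $H^0(\det E\otimes\mathcal O_Z)$, the sub-linear-system spanned by the wedges suffices. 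The basic dictionary is that $(\alpha\wedge\beta)(y)=\alpha(y)\wedge\beta(y)\in(\det E)_y$ vanishes exactly when $\alpha(y),\beta(y)$ are dependent in $E_y$, that is, when the fiber line $s_y$ meets the codimension-$2$ space $H_\alpha\cap H_\beta$; equivalently $\det E\cong\psi^*\mathcal O_{G(1,n)}(1)$ for the morphism $\psi\colon S\to G(1,n)$ sending $y$ to $s_y$, and the wedges are the pullbacks of the Plücker (special Schubert) hyperplanes.

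The one input about $E$ that I need is a jet-generation statement, which I would isolate as a lemma: very ampleness of $\mathcal O_{\mathbb P(E)}(1)$ implies that $E$ is globally generated and $1$-jet spanned, i.e. $H^0(E)\to J^1(E)_y$ is surjective for every $y\in S$, or equivalently the sections vanishing at $y$ surject onto $\mathrm{Hom}(T_yS,E_y)$. I would deduce this from very ampleness by restricting to a fiber: the fiber $s_{y_0}\cong\mathbb P^1$ has normal bundle $(\pi^*T_S)|_{s_{y_0}}\cong\mathcal O_{\mathbb P^1}^{\oplus 2}$, so there is no $H^1$ obstruction to lifting first-order normal data along the fiber, and very ampleness of $\mathcal O_{\mathbb P(E)}(1)$ provides the required sections. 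Note also that $n\ge 4$, since a three-dimensional scroll over a surface cannot be $\mathbb P^3$.

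Cases (i) and (ii) are then routine. For three distinct points the lines $s_{y_1},s_{y_2},s_{y_3}$ are pairwise disjoint, and for each $i$ a dimension count in $\mathbb P^n$ (using $n\ge 4$) produces a codimension-$2$ space meeting the other two fibers but missing $s_{y_i}$; the corresponding wedge vanishes at the two points and not at the third, giving surjectivity onto $\bigoplus_i(\det E)_{y_i}$. For a tangent vector at $y_1$ plus a point $y_2$, I would fix $\alpha,\beta\in H^0(E)$ restricting to a frame of $E$ near $y_1$ and use that the $2$-jet of $\alpha\wedge\gamma$ equals that of the $\beta$-coordinate of $\gamma$ (and $\beta\wedge\gamma$ that of the $\alpha$-coordinate); $1$-jet spannedness then realizes the prescribed $1$-jet at $y_1$, while a wedge of two sections vanishing at $y_1$ (hence vanishing to order $2$ there) but independent at $y_2$ adjusts the value at $y_2$ without disturbing the $1$-jet at $y_1$. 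A short triangulation closes both cases.

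The main obstacle is case (iii), separation of full $2$-jets at a single point, because there $\det E$ must display second-order positivity although very ampleness supplies only first-order information; the resolution is that the wedge is a product and therefore turns first-order data into second order. Taking $\alpha,\beta\in H^0(E)$ with $\alpha(y_0),\beta(y_0)$ a basis of $E_{y_0}$ and using them as a local frame, the value and the first-order part of the $2$-jet are produced by $\alpha\wedge\gamma$ and $\beta\wedge\gamma$ via $1$-jet spannedness, just as in case (ii). For the three second-order directions I would take sections $\gamma,\delta$ with $\gamma(y_0)=\delta(y_0)=0$: then $\gamma\wedge\delta$ vanishes to order $2$ at $y_0$, and its Hessian is the Wronskian-type bilinear form in the gradients $d\gamma(y_0),d\delta(y_0)\in\mathrm{Hom}(T_{y_0}S,E_{y_0})$. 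Since $1$-jet spannedness lets me prescribe these two gradients freely, the remaining step is the linear-algebra check that this antisymmetric pairing surjects onto $\mathrm{Sym}^2 T^*_{y_0}S$; choosing gradients supported on single matrix entries realizes $\partial_{uu},\partial_{uv},\partial_{vv}$ in turn. A final triangulation, with the second-order directions realized by sections vanishing to order $2$ and hence not affecting the lower-order part, assembles surjectivity onto the full six-dimensional jet space, which is the assertion. I expect the bookkeeping in this last case, together with the proof of $1$-jet spannedness, to be where the real work lies.
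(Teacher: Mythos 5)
Your cases (i) and (ii) run along the same lines as the paper's, but your case (iii) — which you correctly identify as the heart of the matter — rests entirely on a lemma you do not prove and which is precisely the difficulty the paper has to work around: that very ampleness of $\mathcal O_{\mathbb P(E)}(1)$ forces $E$ to be $1$-jet spanned, i.e.\ that for sections vanishing at $y_0$ the differential $(df)_{y_0}\in\mathrm{Hom}(T_{y_0}S,E_{y_0})$ can be prescribed \emph{freely}. What very ampleness actually yields (this is the paper's Proposition \ref{matrices}) is only that the space $\Pi\subset\mathrm{Mat}_{2\times 2}(\Bbbk)$ of attainable differentials satisfies: for every nonzero $v\in\Bbbk^2$ there exist $A_1,A_2\in\Pi$ with $A_1v,A_2v$ independent, together with independent prescription of value and differential. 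This does \emph{not} force $\Pi=\mathrm{Mat}_{2\times 2}(\Bbbk)$, and no pointwise argument along the fiber can do so: take $\Pi=\mathfrak{sl}_2$ (trace-zero matrices). For each $p=((1:\lambda),y_0)\in s_{y_0}$ the kernel $K_p$ of $J^1(E)_{y_0}\to J^1(\mathcal O(1))_p$ consists of the jets with zero value whose differential has second row equal to $\lambda$ times the first; since $K_p$ contains matrices of nonzero trace, $K_p\not\subset\mathfrak{sl}_2$, so the proper subspace $\Bbbk^2\oplus\mathfrak{sl}_2\subset J^1(E)_{y_0}$ surjects onto $J^1(\mathcal O(1))_p$ for \emph{every} $p\in s_{y_0}$. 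Hence the full strength of very ampleness at all points of the fiber is compatible with $\Pi$ being a proper $3$-dimensional subspace, and your proposed justification (``no $H^1$ obstruction to lifting first-order normal data'') cannot close this: the required surjectivity onto $H^0(s_{y_0},N^*\otimes\mathcal O(1)|_{s_{y_0}})$ would need vanishing of $H^1(\mathcal O(1)\otimes I_{s_{y_0}}^2)$, which very ampleness does not give.

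Once this unproven lemma is removed, your case (iii) collapses to what the paper explicitly calls the trivial subcase ($\Pi=\mathrm{Mat}_{2\times 2}(\Bbbk)$, ``there is nothing to prove''). The real content of the paper's proof is the complementary situation: it shows $\dim\Pi=2$ is impossible, and for a proper $3$-dimensional $\Pi$ it chooses a normalized basis $A_1,A_2,A_3$ and proves — via an explicit determinant/resultant computation — that the three quadratic forms $(A_iv)\wedge(A_jv)$ still span $\mathfrak m_y^2/\mathfrak m_y^3$, even though one may only use gradients lying in $\Pi$. Your final linear-algebra step (``choosing gradients supported on single matrix entries'') uses elementary matrices that need not lie in $\Pi$, so it does not go through in exactly the case that matters. (Your case (ii) quietly invokes the same free prescription of gradients, but there the weaker Proposition \ref{matrices} suffices, as in the paper.) To repair the proposal you would have to either prove $1$-jet spannedness of $E$ by a genuinely global argument — something neither you nor the paper does — or carry out the analysis of $3$-dimensional $\Pi$ that constitutes the paper's case (3).
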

Before proving this statement we state a very simple lemma.
\begin{lemma}
    \label{obvious}
    Let $S$ and $E$ be as before. Take pairwise disctinct points $y, y_1,\dots, y_k$ and an arbitrary point $p \in s_{y_1}$.
    Then there is a hyperplane $H \subset \mathbb P^n$ containing $s_y$, not containing $s_{y_2}, \dots, s_{y_k}$ (hence, intersecting each of these fibers at one point) and intersecting the fiber $s_{y_1}$ at the point $p$.\qed
\end{lemma}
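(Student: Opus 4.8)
The plan is to translate the statement into elementary projective geometry inside $\mathbb P^n$, exploiting the fact that $\mathcal O(1)$ embeds $X=\mathbb P(E)$ and that the fibers of $\pi$ become pairwise disjoint lines. First I would record the dictionary between hyperplanes and sections: a hyperplane $H\subset\mathbb P^n$ is cut out by some $\alpha\in H^0(\mathcal O(1))=H^0(S,E)$ (this is the $H_\alpha$ of the text), and for a point $y'\in S$ the restriction $\mathcal O(1)|_{s_{y'}}=\mathcal O_{\mathbb P^1}(1)$, so that $s_{y'}\subset H$ exactly when the fiber value $\alpha(y')\in E_{y'}$ vanishes, while otherwise $H$ meets the line $s_{y'}$ in a single point. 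In practice, however, I would work directly with honest hyperplanes and lines in $\mathbb P^n$; the section picture is only bookkeeping, and the real content is that ``containing'' or ``not containing'' a fiber is a linear condition on $H$.

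The geometric facts I would establish are the following. (i) The fibers $s_y,s_{y_1},\dots,s_{y_k}$ are \emph{pairwise disjoint lines}: they are disjoint because $\pi$ has disjoint fibers and $\mathcal O(1)$ is an embedding, and they are lines because $\mathcal O(1)$ restricts to $\mathcal O_{\mathbb P^1}(1)$ on each. (ii) Since $p\in s_{y_1}$ and $s_{y_1}\cap s_y=\varnothing$, the point $p$ is not on $s_y$, so the linear span $\Lambda\coloneqq\langle s_y,p\rangle$ is a $2$-plane. The crucial observation is then that none of the lines $s_{y_1},\dots,s_{y_k}$ lies in $\Lambda$: if some $s_{y_i}\subset\Lambda$, then the two lines $s_y$ and $s_{y_i}$ would both lie in the plane $\Lambda$ and hence meet, contradicting disjointness. (For $i=1$ the point $p$ is used to build $\Lambda$; for $i\ge 2$ only $s_y\subset\Lambda$ is needed.)

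I would finish with a dimension count on the dual side. The hyperplanes $H\supset\Lambda$ form a linear system $\check\Lambda\subset\check{\mathbb P}^{n}$ of dimension $n-3\ge 0$ (nonempty since $X$ is a nondegenerate threefold, so $n\ge 4$). For each $i$, the extra condition $s_{y_i}\subset H$ cuts out the hyperplanes containing $\langle\Lambda,s_{y_i}\rangle\supsetneq\Lambda$, which is a \emph{proper} linear subspace of $\check\Lambda$ precisely because $s_{y_i}\not\subset\Lambda$. A finite union of proper linear subspaces cannot cover $\check\Lambda$ over the infinite field $\mathbb C$, so a general $H\supset\Lambda$ satisfies $s_{y_i}\not\subset H$ for every $i$. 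Such an $H$ contains $s_y$ and $p$ but none of the $s_{y_i}$; as each $s_{y_i}$ is a line meeting $H$ without being contained in it, $H\cap s_{y_i}$ is a single point, and for $i=1$ that point is forced to be $p$. This is exactly the required hyperplane. The argument is short, and the only point demanding care is the non\-containment claim in step (ii) together with the correct invocation of fiber disjointness; the dimension count is routine.
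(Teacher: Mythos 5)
Your proof is correct, and in fact the paper offers no argument at all for this lemma --- it is stated with a \qed{} as ``very simple,'' so your write-up simply supplies the elementary general-position argument the authors evidently had in mind: hyperplanes through the $2$-plane $\Lambda=\langle s_y,p\rangle$ form a $\mathbb P^{n-3}$, each containment condition $s_{y_i}\subset H$ cuts out a proper linear subspace (proper exactly because the fibers are pairwise disjoint lines, so no $s_{y_i}$ lies in $\Lambda$), and a finite union of proper subspaces cannot exhaust a projective space over an infinite field. All the delicate points --- $p\notin s_y$, the noncontainment $s_{y_i}\not\subset\Lambda$, and that $H\cap s_{y_1}$ is then forced to be the single point $p$ --- are handled correctly.
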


\begin{proof}{(of Theorem \ref{2-jet})}

    Consider $r\le3$ distinct points $y_1,\dots,y_r\in S$ and $r$ natural numbers $k_1,\dots,k_r$, $\sum k_i=3$.
    We need to examine three different cases:

    (1) $r=3$, $k_1=k_2=k_3=1$. We prove surjectivity of
    \[
    \varphi \colon H^0(\mathrm{det}\, E) \to H^0(\mathrm{det} \,E\otimes (\mathcal O_{y_1}/ \mathfrak m_{y_1}\oplus \mathcal O_{y_2}/ \mathfrak m_{y_2} \oplus \mathcal O_{y_3}/ \mathfrak m_{y_3})) \cong \Bbbk^3
    \]
    as follows. Choose a section $\alpha_1 \in H^0(E)$ such that $H_{\alpha_1} \supset s_{y_1}$ and $H_{\alpha_1}$ intersects each fiber $s_{y_2}, s_{y_3}$ at one point (using lemma \ref{obvious}); let $p$ be the intersection point of $H_{\alpha_1}$ and $s_{y_3}$.
    Choose a section $\alpha_2 \in H^0(E)$ such that $H_{\alpha_2} \supset s_{y_2}$, $H_{\alpha_2}$ intersects each fiber $s_{y_1}, s_{y_3}$ at one point and intersection of $H_{\alpha_2}$ with $s_{y_3}$ is not the point $p$.
    Then $\varphi (\alpha_1 \wedge \alpha_2)$ is equal to $(0, 0, \lambda), \lambda \neq 0$. Hence all standard basis elements are in the image of $\varphi$ and we are done.

    (2) $r=2$, $k_1=2$, $k_2=1$. We need to prove surjectivity of
    \[
    \varphi \colon H^0(\mathrm{det}\, E) \to H^0(\mathrm{det} \,E \otimes (\mathcal O_{y_1}/\mathfrak m_{y_1}^2\oplus \mathcal O_{y_2} / \mathfrak m_{y_2})) \cong \Bbbk^3\oplus \Bbbk
    \]
    Fix an affine neighborhood $U \ni y_1$ such that $E$ is trivial over $U$. Choose local coordinates $x_1, x_2$ in $\mathcal O_{S, y_1}$.
    Every $f\in H^0(U, E)$ is represented as the pair $f = (f_1, f_2), f_i \in \mathcal O_{S,y_1}$ and since $O_{S,y_1}$ is a regular local ring the images of $f_1, f_2$ in the completion $\widehat{\mathcal O_{S, y_1}} \cong \Bbbk[[x_1, x_2]]$ are represented by the power series $f_1(x_1, x_2), f_2(x_1, x_2)$ in variables $x_1, x_2$.
    Very ampleness of $\mathcal O(1)$ means, in particular, that $\mathcal O(1)$ generates $1$-jets in all points $p \in s_{y_1}$, i.e., for any $p \in s_{y_1}$ the natural map
    \[
    \psi_p \colon H^0(\mathcal O(1)) \to H^0(\mathcal O(1) \otimes \mathcal O_{X, p}/ \mathfrak m_{X, p}^2)
    \]
    is surjective.
    We want to rewrite these conditions for $\psi_p,\, p \in s_{y_1}$ in terms of sections $f\in H^0(E)$.
    Let $(t_1 \colon t_2)$ be homogeneous coordinates on $\mathbb P^1$, so $((t_1 \colon t_2), x_1, x_2)$ are coordinates for $\pi^{-1}(U) \cong \mathbb P^1 \times U$.
    Section $f\in H^0(E)$ written locally as $(f_1(x_1, x_2), f_2(x_1, x_2))$ correspond to the section $\bar f\in H^0(\mathrm{det}\, E)$ written as
    \[
    ((t_1:t_2), x_1, x_2) \mapsto t_1 f_2(x) - t_2 f_1(x),\, x = (x_1, x_2) \in U
    \]
    Consider sections $f \in H^0(E)$ such that $(f_1(y_1)\colon f_2(y_1)) = (1:\lambda), \lambda \in \Bbbk$ and take $u = t_2/t_1$ to be the affine coordinate near $(1:\lambda) \in \mathbb P^1$.
    In coordinates $(u, x_1, x_2)$ section $\bar f$ is written as
    \[
    (u, x_1, x_2) \mapsto f_2(x)-uf_1(x)
    \]
    The gradient vector of $\bar f$ w.r.t $u, x_1, x_2$ equals
    \[
    \left(-f_1(x), \frac{\partial f_2}{\partial x_1} - u\frac{\partial f_1}{\partial x_1}, \frac{\partial f_2}{\partial x_2} - u\frac{\partial f_1}{\partial x_2}\right)
    \]
    Therefore, we can restate the surjectivity condition of all $\psi_p$ as follows:
    \begin{prop}
        \label{matrices}
        Let $\psi_p, p \in s_{y_1}$ be the natural maps as above.
        All the maps $\psi_p$ are surjective iff the following holds: differentials of sections $f\in H^0(E)$ at the point $y_1$ form a linear subspace $\Pi\subset \mathfrak m_{y_1}/\mathfrak m_{y_1}^2 \oplus \mathfrak m_{y_1}/\mathfrak m_{y_1}^2 \cong \mathrm{Mat}_{2 \times 2}(\Bbbk)$
        such that for any nonzero vector $v = (t_1, t_2)^T \in \Bbbk^2$ there exist two matrices $A_1, A_2 \in \Pi$, for which vectors $A_1 v$ and $A_2 v$ are linearly independent,
        and for any $w \in \Bbbk^2$ and $A\in \Pi$ one can choose a section $f \in H^0(E)$ satisfying $f(y_1) = w, (df)_{y_1} = A$.\qed
    \end{prop}
    
    To complete the proof in the case (2) choose a section $g \in H^0(E)$ such that $g(y_1) = (1, 0)$ and the corresponding hyperplane contains fiber $s_{y_2}$ (i.e. $g(y_2) = 0$) using Lemma \ref{obvious}.
    Consider sections $f \in H^0(E)$ with $f(y_1) = 0$. Write power series representations of $g$ and $f$ in a $2$ by $2$ matrix $J$:
    \begin{equation*}
        \begin{pmatrix}
            1+g_1(x_1,x_2)&g_2(x_1,x_2)\\
            f_1(x_1,x_2)  &f_2(x_1,x_2)\\
        \end{pmatrix}
    \end{equation*}
    Here the power series $g_i, f_i$ are in $\widehat{m_{y_1}} \subset \widehat{\mathcal O_{S, y_1}}$.
    The differential of $g\wedge f$ at the point $y_1$ is the linear part of the power series $\mathrm{det}\, J$, so it is actually equal to the linear part of $f_2$.
    Due to Proposition \ref{matrices} we can choose $f$ such that section $g\wedge f$ have an arbitrary differential at $y_1$.
    Therefore the image of $\varphi$ contains the subspace $(\mathfrak m_{y_1}/\mathfrak m_{y_1}^2, 0) \subset \mathcal O_{y_1} / \mathfrak m_{y_1}^2 \oplus \mathcal O_{y_2}/ \mathfrak m_{y_2}$.
    Similarly to the case (1) one can a get vector of the form $(1+ g(x_1, x_2), 0)$ and the vector $(0, 1)$, hence $\varphi$ is surjective.
    
    (3) $r=1$, $k_1=3$; we will write $y$ instead of $y_1$. We are to prove the surjectivity of
    \[
    \varphi \colon H^0(\mathrm{det}\, E) \to H^0(\mathrm{det} \,E/ \mathfrak m_{y}^3)
    \]
    Let us analyse possible subspaces $\Pi \subset \mathrm{Mat}_{2 \times 2}(\Bbbk)$ satisfying conditions of Proposition \ref{matrices}.
    Note that if $\Pi$ is equal to $\mathrm{Mat}_{2 \times 2}(\Bbbk)$ there is nothing to prove: if one consider sections $f, g \in H^0(E)$ having zero at $y$, then quadratic part of $f\wedge g$ spans $\mathfrak m_y^2/\mathfrak m_y^3$.
    Combining this with arguments from the case (2) yields surjectivity of $\varphi$.
    
    Note also that $\Pi$ cannot be $2$-dimensional.
    Indeed, if $\Pi$ is spanned by $A_1, A_2 \in \mathrm{Mat}_{2 \times 2}(\Bbbk)$, then the hypothesis of Proposition \ref{matrices} implies that for any nonzero vector $v = (t_1, t_2)^T$ vectors $A_1 v, A_2 v$ are linearly independent.
    The determinant of the matrix constructed from two vectors $A_1 v, A_2 v$ is equal to
    \[
    t_1^2(a_1 c_2-a_2 c_1) +t_1 t_2 (a_1 d_2 - b_2 c_1 + b_1 c_2 - a_2 d_1) +t_2^2(b_1 d_2 - b_2 d_1)
    \]
    Clearly there exist a pair $(t_1, t_2)$ such that this expression vanishes, thus $\Pi$ cannot be of dimension two.

    Now consider the case of $3$-dimensional $\Pi$. We want to choose a suitable basis for $\Pi$.
    By Proposition \ref{matrices} there are two matrices $A_1, A_2$ in $\Pi$ such that $A_1 (1, 0)^T, A_2 (1, 0)^T$ are linearly independent. Hence we can choose $A_1, A_2$ of the form
    {\sloppy

    }
    \begin{equation*}
        A_1 =\begin{pmatrix}
            1&a_1\\
            0&b_1\\
        \end{pmatrix},\,
        A_2 =\begin{pmatrix}
            0&a_2\\
            1&b_2\\
        \end{pmatrix}
    \end{equation*}
    There exists a matrix $A_3 \in \Pi \setminus \mathrm{Span}\, (A_1, A_2)$ of the form
    \begin{equation*}
        A_3 =\begin{pmatrix}
            a_3&1\\
            b_3&0\\
        \end{pmatrix}
    \end{equation*}
    or
    \begin{equation*}
        A_3 =\begin{pmatrix}
            a_3&0\\
            b_3&1\\
        \end{pmatrix}
    \end{equation*}
    and after swapping the rows in all the matrices in $\Pi$ if necessary we can assume that $A_3$ has the form
    \begin{equation*}
        A_3 =\begin{pmatrix}
            a_3&1\\
            b_3&0\\
        \end{pmatrix}
    \end{equation*}
    We want to prove that sections of the form $f\wedge g \in H^0(\mathrm{det}\, E)$, $f(y) = g(y) = 0$, span $\mathfrak m_y^2/\mathfrak m_y^3$,
    that is, that the three homogeneous quadratic polynomials 
    \begin{align*}
    (A_1 \cdot (t_1, t_2)^T) &\wedge (A_2 \cdot (t_1, t_2)^T)\\
    (A_2 \cdot (t_1, t_2)^T) &\wedge (A_3 \cdot (t_1, t_2)^T)\\
    (A_1 \cdot (t_1, t_2)^T) &\wedge (A_3 \cdot (t_1, t_2)^T)\\
    \end{align*}
    in variables $t_1, t_2$ are linearly independent.
    We show this by direct computation using Maple.
    Writing these three polynomials into the $3$-by-$3$ matrix (the first polynomial in the first row and so on) with respect to the basis $t_1^2, t_1 t_2, t_2^2$ we have:
    \begin{equation*}
        D =\begin{pmatrix}
            1&a_1 + b_2 &a_1 b_2 - a_2 b_1  \\
            b_3&a_1 b_3 - b_1 a_3&-b_1      \\
            -a_3& a_2 b_3 - a_3 b_2 -1&-b_2 \\
        \end{pmatrix}
    \end{equation*}
    Note that $P_1$ does not vanish at the point $(t_1\colon t_2) = (1\colon 0)$. Putting $t_2 = 1$ we get polynomials in variable $t_1$:
    \begin{align*}
        P_1 &= t_1^2 + t_1(a_1 +b_2) + a_1 b_2 - a_2 b_1\\
        P_2 &= t_1^3 b_3 + t_1 (a_1 b_3 - b_1 a_3) - b_1\\
        P_3 &= t_1^3 (-a_3) +t_1(a_2 b_3 - a_3 b_2 -1) - b_2\\
    \end{align*}
    We have
    \begin{align*}
        \mathrm{det}\, D &= a_1^2 a_3 b_2 b_3 - a_1 a_2 a_2 b_1 b_3 + a_1 a_2 b_2 b_3^2 - a_1 a_3^2 b_1 b_2 - a_1 a_3 b_2^2 b_3 -\\
        &- a_2^2 b_1 b_3^2+a_2 a_3^2 b_1^2 + a_2 a_3 b_1 b_2 b_3 + a_1 a_3 b_1 -\\
        &-a_1 b_2 b_3 + 2 a_2 b_1 b_3 + a_3 b_1 b_2 + b_2^2 b_3 - b_1\\
    \end{align*}
    and $res(P_1, P_2) = -b_1 \cdot \mathrm{det} \, D$, $res(P_1, P_3) = a_2 \cdot \mathrm{det}\, D$. 
    Suppose now that $\mathrm{det}\, D$ is zero. Then $res(P_1, P_2) = res(P_1, P_3) = 0$.
    Since the rows of the matrix $D$ are linearly dependent, either the second row is a linear combination of the first and the third rows or the third row is a linear combination of the first and the second row.
    In both cases all three polynomials have a common zero contradicting Proposition \ref{matrices}.
    Thus $\mathrm{det}\, D \neq 0$ and we are done.
\end{proof}

\begin{remark}
    The bound for $k$-jet ampleness of $\mathrm{det}\, E$ is sharp: there are many examples of rank $2$ very ample bundles such that $\mathrm{det}\, E$ is not $3$-jet ample
    (in fact, even not $3$-very ample; $k$-very ampleness is weaker than $k$-jet ampleness, see \cite{Beltrametti_kjet}).
    Consider an arbitrary smooth surface $S \subset \mathbb P^3$ of degree at least $4$ and the very ample bundle $E =\mathcal O(1)\oplus \mathcal O(1)$ on $S$.
    Then $\mathrm{det}\, E \cong \mathcal O(2)$ does not separate any four different points of $S$,
    lying on one line in $\mathbb P^3$
    (if section from $H^0(S, \mathcal O(2)) \cong H^0(\mathbb P^3, \mathcal O(2))$ vanishes at three collinear points then this section vanishes on the whole line through this points).
    For cubics and quadrics the bundle $\mathrm{det}\, E$ is not $3$-jet ample, either.
\end{remark}
\subsection{Inspection of Takahashi's list}
\label{inspection}
\subsubsection{Excluding K3 and Enriques}
\label{no_k3}
Consider the cases (15) and (16) from Takahashi's list:

(15): $S$ is a K3-surface, $c_1(E)^2 = 10$ or $12$, $c_2(E)= 3$, $\mathrm{det}\, E$ is $2$-jet ample

(16): $S$ is an Enriques surface, $c_1(E)^2 = 12$, $\mathrm{det}\, E$ is $2$-jet ample.

($2$-jet ampleness of $\mathrm{det}\, E$ says nothing in our case because of Theorem \ref{2-jet})

\begin{lemma}
    \label{k3}
    Suppose $S$ is a K3 or Enriques surface and $E$ is as before, $c_2(E) = 3$. Then $h^0(E) = h^0(\mathrm{det}\, E) - 2$.
\end{lemma}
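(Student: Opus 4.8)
The plan is to avoid computing the individual cohomology groups of $E$ directly (which, as a Riemann--Roch computation shows, behave differently on K3 and Enriques surfaces) and instead to relate $h^0(E)$ to $h^0(\det E)$ through the Koszul resolution attached to a section. Since $E$ is very ample it is globally generated, and as $E$ has rank $2$ on the surface $S$, a general section $s \in H^0(E)$ has zero scheme $Z = Z(s)$ of pure dimension $0$ and length $c_2(E) = 3$. This yields the exact sequence
\[
0 \to \mathcal O_S \xrightarrow{\ s\ } E \to \mathcal I_Z \otimes \det E \to 0 .
\]
First I would pass to cohomology. Because $S$ is a K3 or an Enriques surface, $h^1(\mathcal O_S) = 0$, so this sequence gives $h^0(E) = 1 + h^0(\mathcal I_Z \otimes \det E)$.

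Next I would analyze $h^0(\mathcal I_Z \otimes \det E)$ using the restriction sequence
\[
0 \to \mathcal I_Z \otimes \det E \to \det E \to \det E \otimes \mathcal O_Z \to 0 ,
\]
where $\det E \otimes \mathcal O_Z$ is a skyscraper sheaf of length $3$. Two inputs make this clean. The first is that $\det E$ is $2$-jet ample by Theorem \ref{2-jet}, hence $2$-very ample, so the restriction map $H^0(\det E) \to H^0(\det E \otimes \mathcal O_Z)$ is surjective for the length-$3$ subscheme $Z$. The second is that $h^1(\det E) = 0$: indeed $\det E$ is ample and $K_S$ is (numerically) trivial, so $\det E \cong K_S + A$ with $A$ ample, and Kodaira vanishing applies. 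Combining surjectivity with $h^1(\det E)=0$ gives $h^0(\mathcal I_Z \otimes \det E) = h^0(\det E) - 3$, whence $h^0(E) = 1 + h^0(\det E) - 3 = h^0(\det E) - 2$.

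The only genuine subtlety is guaranteeing that the zero scheme $Z$ is $0$-dimensional of length exactly $3$; this follows from global generation of the very ample bundle $E$ together with the rank-versus-dimension count, so that a general section vanishes in the expected codimension. Beyond this, I expect the argument to be routine once $2$-jet ampleness of $\det E$ is invoked, as it is precisely what forces the three points to impose independent conditions on $|\det E|$. I would emphasize that the computation never uses the actual value of $h^1(E)$ --- which in fact equals $1$ on a K3 surface but $0$ on an Enriques surface --- since this discrepancy is absorbed automatically by the Koszul sequence, and this is what allows a single uniform statement.
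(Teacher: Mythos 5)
Your proof is correct and takes essentially the same route as the paper: the same Koszul sequence for a section vanishing on a length-$3$ zero scheme, the same use of $h^1(\mathcal O_S)=0$, and the same restriction sequence with surjectivity of $H^0(\det E)\to H^0(\det E\otimes\mathcal O_Z)$ supplied by Theorem~\ref{2-jet} (the paper works with a reduced zero scheme and invokes $2$-jet ampleness directly, while you pass through $2$-very ampleness --- an equivalent move). Two cosmetic remarks: the appeal to Kodaira vanishing for $h^1(\det E)=0$ is superfluous, since surjectivity of the restriction map alone yields $h^0(\mathcal I_Z\otimes\det E)=h^0(\det E)-3$; and your parenthetical claim that $h^1(E)=1$ on a K3 surface is not needed and in fact inconsistent with Riemann--Roch in these cases (which is harmless here, since these cases are ultimately excluded anyway).
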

\begin{proof}
    Consider a section $s \in H^0(E)$ with three different zeroes $y_1, y_2, y_3$ and denote by $\eta$ the reduced $0$-dimensional subscheme supported on $\{y_1\} \cup \{y_2\} \cup \{y_3\}$.
    We can build a Koszul complex corresponding to the section $s$:
    \[
    0 \longrightarrow \mathcal O_S \stackrel{\cdot s}{\longrightarrow} E \stackrel {t \mapsto s \wedge t}{\longrightarrow} \mathrm{det}\, E \otimes \mathcal I_{\eta}  \longrightarrow 0
    \]
    Since $H^1(\mathcal O_S) = 0$, the sequence
    \[
    0 \longrightarrow H^0(\mathcal O_S) \longrightarrow H^0(E) \longrightarrow H^0(\mathrm{det}\, E \otimes \mathcal I_{\eta})  \longrightarrow 0
    \]
    is exact too. Hence $h^0(E) = 1 + h^0(\mathrm{det}\, E \otimes \mathcal I_{\eta})$.
    Next we consider the sequence
    \[
    0 \longrightarrow \mathrm{det}\, E \otimes \mathcal I_{\eta} \longrightarrow \mathrm{det}\, E \longrightarrow \mathrm{det}\, E \otimes \mathcal O_{\eta} \longrightarrow 0
    \]
    From Theorem \ref{2-jet} we know that this sequence is exact on the level of global sections, too.
    Therefore we have 
    \[
    h^0(\mathrm{det}\, E \otimes \mathcal I_{\eta}) = h^0(\mathrm{det}\, E) - h^0(\mathrm{det}\, E \otimes \mathcal O_{\eta}) = h^0(\mathrm{det}\, E) - 3
    \]
    and
    \[
    h^0(E) = h^0(\mathrm{det}\, E) - 2
    \]
\end{proof}
By Riemann-Roch we have $h^0(\mathrm{det}\, E) = \chi(\mathcal O_S) + \frac{1}{2} (\mathrm{det}\, E)(\mathrm{det}\, E - K_S) = \chi(\mathcal O_S) + c_1(E)^2/2$ in both cases $(15)$ and $(16)$.
Now consider the case $(15)$: we have $\chi(\mathcal O_S) = 2$ and $h^0(\mathrm{det}\, E) = 2+5$ or $2+6$.
From lemma \ref{k3} we have that $h^0(E) = 5$ or $6$. If $h^0(E) = 5$ then $X$ is a hypersurface in $\mathbb P^4$ and we get a contradiction comparing second Betti number:
one has $b_2(\mathbb P(E)) = b_2(S) + 1 \geq 2$ but the second Betti number of a smooth hypersurface in $\mathbb P^4$ equals one.
For the case $h^0(E) = 6$ we use the classification of three-dimensional scrolls embedded in $\mathbb P^5$ given in \cite{Ottaviani}.
We know that the degree of $X$ is equal to $c_1(E)^2 - c_2(E) = 12 - 3 = 9$, and there is actually one K3-scroll of degree $9$ in this list but it is of the form
$\mathbb P(E')$ for a very ample $E'$ with $c_2(E') = 5$ (and these $S$ and $E'$ are uniquely determined by $X$).
So the case $(15)$ is excluded.

Now consider the case $(16)$. In this case, $\chi(\mathcal O_S) = 1$ and $h^0(E) = 5$, a contradiction.
\subsubsection{Extracting very ample bundles}
\label{extraction}
Let us show that bundles from Theorem \ref{classification3} are very ample and that other bundles from Takahashi's list having $c_2(E) = 3$ are not very ample.
First, let us check the following easy
\begin{prop}
    \label{split}
    A rank-$2$ bundle $E = L \oplus M$, where $L$ and $M$ are line bundles on a surface $S$, is very ample if and only if both $L$ and $M$ are very ample.
\end{prop}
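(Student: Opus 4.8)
The plan is to work in the convention $\mathbb P(E)=\operatorname{Proj}\operatorname{Sym}E$ of the paper, so that $\pi_*\mathcal O(1)=E$, $H^0(\mathcal O(1))=H^0(E)=H^0(L)\oplus H^0(M)$, and very ampleness of $E$ means very ampleness of $\mathcal O(1)$ on $\mathbb P(E)$. The two projections $E\to L$ and $E\to M$ furnish two \emph{disjoint} sections $\sigma_L,\sigma_M\colon S\to\mathbb P(E)$ along which $\mathcal O(1)$ restricts to $L$ and to $M$ respectively. This gives the easy implication at once: if $\mathcal O(1)$ is very ample, then its restrictions to the closed subvarieties $\sigma_L(S)\cong S$ and $\sigma_M(S)\cong S$, namely $L$ and $M$, are very ample.

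For the converse, assume $L$ and $M$ are very ample. First I would note that $E=L\oplus M$ is then globally generated, so $\mathcal O(1)$ is base-point free and defines a morphism $\phi\colon\mathbb P(E)\to\mathbb P^n$, where $\mathbb P^n=\mathbb P(H^0(L)\oplus H^0(M))$ contains the two complementary disjoint linear subspaces $\Lambda_L=\mathbb P(H^0(L))$ and $\Lambda_M=\mathbb P(H^0(M))$. Since the sections of $H^0(M)$ vanish along $\sigma_L(S)$ (and vice versa), $\phi$ carries $\sigma_L,\sigma_M$ into $\Lambda_L,\Lambda_M$ via the embeddings $\phi_L=\phi_{|L|}$ and $\phi_M=\phi_{|M|}$, and each fibre $s_y$ is mapped isomorphically onto the line $\ell_y$ joining $\phi_L(y)\in\Lambda_L$ to $\phi_M(y)\in\Lambda_M$ (because $\mathcal O(1)|_{s_y}\cong\mathcal O_{\mathbb P^1}(1)$ and $H^0(E)\to E_y$ is surjective). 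By the standard criterion it now suffices to show that $\phi$ separates points and tangent vectors.

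The key tool is the pair of linear projections $\rho_L\colon\mathbb P^n\dashrightarrow\Lambda_L$ with centre $\Lambda_M$ and $\rho_M\colon\mathbb P^n\dashrightarrow\Lambda_M$ with centre $\Lambda_L$. A short computation with the joining lines shows that on $\mathbb P(E)\setminus\sigma_M(S)$ one has $\rho_L\circ\phi=\phi_L\circ\pi$, and symmetrically $\rho_M\circ\phi=\phi_M\circ\pi$ off $\sigma_L(S)$; moreover $\ell_y$ meets $\Lambda_M$ only in $\phi_M(y)$, so $\phi(p)\in\Lambda_M$ forces $p\in\sigma_M(S)$. Injectivity on points follows: two points in one fibre are separated since $\phi|_{s_y}$ is a linear embedding of $\mathbb P^1$, and two points in distinct fibres $s_y,s_{y'}$ cannot share an image, since applying $\rho_L$ (or $\rho_M$, if the common image lay in $\Lambda_M$) would force $\phi_L(y)=\phi_L(y')$ (resp. $\phi_M(y)=\phi_M(y')$), contradicting injectivity of $\phi_L$ (resp. $\phi_M$). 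The same projections handle tangent vectors: vertical vectors are detected by the linear embedding of the fibre, while for general $v$ with $d\phi_p(v)=0$ I would apply $d\rho_L$ (or $d\rho_M$) to get $d\phi_L(d\pi_p(v))=0$, whence $d\pi_p(v)=0$ because $\phi_L$ is an immersion, reducing to the vertical case.

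The main obstacle is organizing the tangent-space argument cleanly around the loci $\sigma_L(S)$ and $\sigma_M(S)$ where the respective projections are undefined: one must check that for every $p$ at least one of $\rho_L,\rho_M$ yields a morphism near $\phi(p)$ (which holds since $\sigma_L(S)\cap\sigma_M(S)=\varnothing$), and verify the identities $\rho_L\circ\phi=\phi_L\circ\pi$ and their differential versions precisely on their domains. Once injectivity on closed points and on tangent spaces is established, these are exactly the conditions that $|\mathcal O(1)|$ separate points and tangent directions, so $\mathcal O(1)$ — and hence $E$ — is very ample.
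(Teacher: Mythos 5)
Your proof is correct, but it takes a genuinely different route from the paper's in both directions. For the implication ``$E$ very ample $\Rightarrow$ $L$, $M$ very ample,'' the paper argues by contraposition with explicit sections and jets: a failure of $L$ to separate two points $y_1,y_2$ is transported to a pair of points (one in each fiber $s_{y_1}$, $s_{y_2}$) that no hyperplane section of $\mathbb P(E)$ separates, and a failure of $L$ to separate tangent vectors is shown, via the criterion of Proposition~\ref{matrices}, to produce a point of $s_y$ where $\mathcal O(1)$ does not generate $1$-jets. Your argument --- restrict $\mathcal O(1)$ to the two disjoint sections $\sigma_L(S)$, $\sigma_M(S)$, where it pulls back to $L$ and $M$, and use that the restriction of a very ample line bundle to a closed subvariety is very ample --- is shorter, avoids all jet computations, and needs no case analysis; it is a genuine simplification. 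For the converse implication ``$L$, $M$ very ample $\Rightarrow$ $E$ very ample,'' the paper gives no proof at all, citing \cite[Lemma 3.2.3]{adjunction_theory}, whereas you prove it directly via the join picture: each fiber maps isomorphically to the line joining $\phi_L(y)\in\Lambda_L$ and $\phi_M(y)\in\Lambda_M$, and the two linear projections centered at $\Lambda_M$ and $\Lambda_L$ intertwine $\phi$ with $\phi_L\circ\pi$ and $\phi_M\circ\pi$ on their domains, yielding separation of points and of tangent vectors; the one delicate point, which you correctly flag and resolve, is that for every $p$ at least one projection is defined near $\phi(p)$, because $\sigma_L(S)\cap\sigma_M(S)=\varnothing$. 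In sum, your write-up is self-contained where the paper leans on a reference, at the cost of length; the paper's choice is natural in its context, since the jet machinery it reuses is already in place from the proof of Theorem~\ref{2-jet}.
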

\begin{proof}
    For the `if' part, see \cite[Lemma 3.2.3]{adjunction_theory}.
    Suppose now that $E$ is very ample.
    Arguing by contradiction, suppose that, say, $L$ is not very ample and let us prove that $E$ is not very ample.
    If $L$ does not separate the pair of points $y_1, y_2, \, y_1 \neq y_2;\, y_1, y_2 \in S$, then any section $s \in H^0(X, \mathcal O(1))$ passing through the point $(0\colon 1)$ in the fiber $y_1$ passes through the point $(0\colon 1)$ in the fiber $y_2$ too.
    If $L$ does not separate tangent vectors, i.e., if there exists a point $y\in S$ such that the natural map $H^0(L)\to L \otimes (\mathcal O_{S, y}/\mathfrak m_{S,y}^2)$ is not surjective, then one easily sees that the conditions of Proposition \ref{matrices} do not hold,
    so there exists a point $x \in s_y$ such that
    the natural map $H^0(\mathcal O(1)) \to \mathcal O(1)\otimes (\mathcal O_{X, x})/\mathfrak m_{X, x}^2$ is not surjective.
\end{proof}


So, very ampleness of bundles from the list of Theorem \ref{classification3} in the cases $1, 3, 4, 5$ follows from the above proposition (very ampleness of $C_0 +2f$ on $\mathbb F_1$ follows from \cite[Proposition 5.1]{Beltrametti_kjet}, for example) and very ampleness in the case $2$ is well known.

Now we need to prove that bundles in the cases $(13)$ and $(14)$ of Theorem \ref{Takahashi} are not very ample.
For $(13)$ we need to check that line bundle $C_0 + f$ on $\mathbb P_C(\mathcal E)$ is not very ample and then use Proposition \ref{split} (here $\mathbb P_C(\mathcal E)$ is a $\mathbb P^1$-bundle over an elliptic curve $C$ with invariant $e = -1$).
This statement follows from \cite[Theorem 4.3]{Gushel}.

Consider the remaining case $(14)$.
From the discussion after Claim $3.2$ in \cite{Takahashi} the bundle $E$ is a non-split extension
\[
0\to \mathcal O_S(C_0 +f) \to E \to \mathcal O_S(C_0+f) \to 0
\]
From \cite[Proposition 2.3]{Gushel} we get that $h^1(C_0 + f) = 0$, hence $H^0(E) \cong H^0(C_0+f)\oplus H^0(C_0 +f)$.
Since $C_0 +f$ is not very ample, this decomposition shows that $E$ is not very ample (we can argue as in the proof of Proposition \ref{split}).

\section{Pencils of quadrics}
\label{pencils_of_quadrics}
Let $p \colon X \to C$ be a pencil of quadrics.
According to \cite[Proposition 5.12 and Proposition 5.13]{Lvovski}, if the space of vanishing cycles is 2-dimensional for such an $X$, then all the fibers of $p$ are smooth.
Put $E = p_*\mathcal O(1)$.
$E$ is a rank-$4$ bundle on $C$ and the complete linear system $|\mathcal O_{\mathbb P(E)|S}(1)|$ maps $\mathbb P(E)$ in $\mathbb P^n$ such that the fiber of $p' \colon \mathbb P(E) \to C$ over a point $a \in C$ is mapped isomorphically to the linear span of the quadric $X_a$.
It is clear that there exists a line bundle $\mathcal L$ on $C$ such that $X$ is a zero set of a section $s \in H^0(\mathcal O_{\mathbb P(E)}(2) \otimes (p')^*\mathcal L)$.
\begin{prop}
    Suppose that $p \colon X \to C$ is a pencil of quadrics and $X$ has two-dimensional space of vanishing cycles. Then $C \cong \mathbb P^1$, $\mathcal L \cong \mathcal O_{\mathbb P^1}(-3)$, $E \cong \mathcal O_{\mathbb P^1}(d_1) \oplus \mathcal O_{\mathbb P^1}(d_2) \oplus \mathcal O_{\mathbb P^1}(d_3) \oplus \mathcal O_{\mathbb P^1}(d_4)$, $(d_1, d_2, d_3, d_4) = (1,1,2,2)$,
    and $X$ can be represented as a zero set of a section 
    \[
    s \in H^0(\mathcal O_{\mathbb P(E)}(2)\otimes(p')^*\mathcal L) \cong H^0(\mathrm{Sym}^2 E \otimes \mathcal O_{\mathbb P^1}(-3)),
    \]
    that can be viewed as a $4\times 4$ matrix $a_{ij}$, where $a_{ij} \in H^0(d_i +d_j - 3)$ and the matrix $a_{ij}$ is non-degenerate at every point of $\mathbb P^1$.
    Since $a_{ij}$ is a section of rank-$16$ bundle
    \begin{equation*}
        \begin{pmatrix}
            \mathcal O(-1)&\mathcal O(-1)&\mathcal O    &\mathcal O    \\
            \mathcal O(-1)&\mathcal O(-1)&\mathcal O    &\mathcal O    \\
            \mathcal O   &\mathcal O   &\mathcal O(1)&\mathcal O(1)\\
            \mathcal O   &\mathcal O   &\mathcal O(1)&\mathcal O(1)\\
        \end{pmatrix}
    \end{equation*}
    on $\mathbb P^1$
    non-degeneracy of the matrix $a_{ij}$ at every point in $\mathbb P^1$ is equivalent to the system
    \[
        \left\{
            \begin{array}{rcl}
                a_{13}a_{24}-a_{14}a_{23} \neq 0\\
                a_{13}a_{24}-a_{14}a_{23} \neq 0\\
            \end{array}
        \right.
    \]
    ($a_{13}, a_{24}, a_{14},a_{23},a_{13},a_{24},a_{14},a_{23} \in H^0(\mathcal O) \cong \mathbb C$).
\end{prop}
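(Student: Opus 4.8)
The plan is to encode the quadric fibration as a symmetric bundle map and then extract all numerical data from degrees on $C$. With $E=p_*\mathcal O(1)$ of rank $4$ and $X\subset\mathbb P(E)$ a divisor in $|\mathcal O_{\mathbb P(E)}(2)\otimes(p')^*\mathcal L|$, the defining section $s$ is the same datum as a symmetric morphism $s\colon E^\vee\to E\otimes\mathcal L$, i.e.\ a symmetric $4\times4$ matrix of sections. The fibre $X_a$ is a smooth quadric iff $s(a)$ has rank $4$, so the hypothesis that all fibres are smooth (taken from \cite{Lvovski}) says that $\det s$ is a nowhere-vanishing section of $\det(E)^{\otimes2}\otimes\mathcal L^{\otimes4}$. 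Such a section trivialises this line bundle, which gives the first relation $\deg E+2\deg\mathcal L=0$ and moreover shows $s$ is a fibrewise isomorphism $E^\vee\cong E\otimes\mathcal L$. The \'etale double cover of $C$ parametrising the two families of rulings is classified by this line bundle together with $\det s$, so its triviality makes the cover split; hence $R^2p_*\mathbb Z$ is a constant local system and the Leray spectral sequence gives $b_2(X)=3$.

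Next I would pin down the degrees topologically. A general hyperplane section $Y=X\cap H$ is a conic bundle $Y\to C$, degenerate exactly over the finite set where $H$ is tangent to the fibre quadric; this set is the zero locus of the tangency section $\Delta(h)=h^{\mathsf T}\operatorname{adj}(s)h$, which lives in $\det(E)^{\otimes2}\otimes\mathcal L^{\otimes3}$, so its number of points is $\delta=2\deg E+3\deg\mathcal L=-\deg\mathcal L$ after using the first relation. Counting Euler characteristics, $\chi(Y)=2\chi(C)+\delta$, while $\chi(Y)=2-2b_1(Y)+b_2(Y)$ with $b_1(Y)=2g(C)$ (the fibres being rational and connected), gives $b_2(Y)=2+\delta$ regardless of $g$. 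Combining with $b_2(X)=3$ yields $\mathrm{ev}(Y)=\delta-1$, so the hypothesis $\mathrm{ev}(Y)=2$ forces $\delta=3$, hence $\deg\mathcal L=-3$ and $\deg E=6$.

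The step I expect to be the real obstacle is $C\cong\mathbb P^1$, because the computation above is insensitive to $g=g(C)$. Here I would use the sectional genus: from $K_X+2H=p^*(K_C+\det E+\mathcal L)$ one gets $(K_X+2H)\cdot H^2=(2g+1)\cdot(H^2\cdot X_a)=4g+2$, so the sectional genus is $2g+2$, while $\deg X=2\deg E+\deg\mathcal L=9$ and $X\subset\mathbb P^{\,h^0(E)-1}$. As $g$ grows the sectional genus grows while $h^0(E)=10-4g+h^1(E)$ (Riemann--Roch), and hence the ambient dimension, shrink; one controls $h^1(E)$ through the global generation of $E$ (every quotient line bundle of a globally generated bundle on a curve of genus $\ge1$ has degree $\ge2$) and then tries to contradict Castelnuovo-type and linear-normality bounds on the curve section. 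This genus exclusion is the delicate part of the argument and, failing a clean bound, is exactly the information that the classification of finite-monodromy pencils of quadrics in \cite{Lvovski} supplies. Granting it, $C\cong\mathbb P^1$.

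Once $C\cong\mathbb P^1$ the rest is bookkeeping. By Grothendieck $E=\bigoplus_{i=1}^4\mathcal O(d_i)$ with $\sum d_i=6$, and $a_{ij}\in H^0(\mathcal O(d_i+d_j-3))$. Very ampleness forces every $d_i\ge1$: if some $d_i\le0$ then $a_{ii}\in H^0(\mathcal O(2d_i-3))=0$, so the section $\Sigma$ of $\mathbb P(E)\to\mathbb P^1$ cut out by that summand lies on $X$ and has $\deg(\mathcal O_X(1)|_\Sigma)=d_i\le0$, contradicting very ampleness. Among partitions $6=\sum d_i$ with $d_i\ge1$ only $(1,1,1,3)$ and $(1,1,2,2)$ occur; the former is impossible, since then the $3\times3$ block indexed by the three $\mathcal O(1)$-summands vanishes and $\det s\equiv0$, so some fibre is singular. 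This leaves $(1,1,2,2)$, giving exactly the displayed block shape (with $H^0(\mathcal O(-1))=0$ killing the upper-left $2\times2$ block and $a_{13},a_{14},a_{23},a_{24}$ constant). A cofactor expansion of the resulting symmetric matrix gives $\det s=(a_{13}a_{24}-a_{14}a_{23})^2$, which is a nonzero constant iff $a_{13}a_{24}-a_{14}a_{23}\neq0$; this is precisely the asserted non-degeneracy condition.
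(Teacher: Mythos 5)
There are two genuine gaps, both in the part of the argument that pins down the numerical invariants and the base curve; the endgame (Grothendieck splitting, excluding $d_i\le 0$ via the degree of a very ample bundle on the contracted section, excluding $(1,1,1,3)$ by the vanishing $3\times3$ block, and $\det s=(a_{13}a_{24}-a_{14}a_{23})^2$) is correct and matches the paper.

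First, your derivation of $\deg\mathcal L=-3$, $\deg E=6$ hinges on $b_2(X)=3$, which you justify by claiming that triviality of $\det(E)^{\otimes2}\otimes\mathcal L^{\otimes4}$ forces the ruling double cover of $C$ to split. That implication is false: the cover splits iff $\det s$ is the square of a section of $N=\det E\otimes\mathcal L^{\otimes2}$, whereas the nowhere-vanishing $\det s$ only trivializes $N^{\otimes2}$; for $g(C)\ge1$ the line bundle $N$ can be a nontrivial $2$-torsion bundle (e.g.\ the quotient of $\widetilde C\times\mathbb P^1\times\mathbb P^1$ by a simultaneous involution gives a smooth quadric bundle with nonsplit ruling cover), in which case $b_2(X)=2$ and your Euler-characteristic count would yield $\delta=2$, $\deg\mathcal L=-2$, $\deg E=4$ instead. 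The paper sidesteps $b_2(X)$ entirely: since the vanishing root system is $A_2$, \cite[Proposition 5.13]{Lvovski} gives \emph{exactly three} degenerate fibers of the induced pencil $Y\to C$, hence $2\deg E'+3\deg\mathcal L=3$ directly, and combined with $2\deg E+4\deg\mathcal L=0$ this yields the correct degrees over an arbitrary base curve.

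Second, and more seriously, the step $C\cong\mathbb P^1$ is simply not proved: you call it the real obstacle, sketch a sectional-genus/Castelnuovo strategy without completing it, and then defer to ``the classification of finite-monodromy pencils of quadrics in \cite{Lvovski}.'' But that classification (quoted as Proposition \ref{sommese-lvovski} above) only asserts that $X$ is a pencil of quadrics over \emph{some} smooth curve; it contains no statement about the base being rational, so the deferral points at a reference that does not supply the missing fact. The paper closes this gap by a different idea: from $\deg E=6$, $\deg\mathcal L=-3$ it computes $\deg X=2\deg E+\deg\mathcal L=9$ and then invokes Livorni's classification of degree-nine varieties \cite{Livorni} (case 3.1.3), which shows that hyperquadric fibrations of degree $9$ are fibered over $\mathbb P^1$. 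Without that (or an honest completion of your genus-bound argument), the proposition is not established.
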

\begin{proof}
    We imitate the proof of Proposition 5.14 in \cite{Lvovski}.
    Since all the fibers of $p$ are smooth, discriminant of $s$ as a family of quadratic forms, which is a section of $(\mathrm{det}\, E)^{\otimes 2}\otimes L^{\otimes 4}$, has no zeroes on $C$. Hence
    \begin{gather}
        \label{first}
        2 \,\mathrm{deg}\, E + 4\,\mathrm{deg}\, \mathcal L = 0
    \end{gather}
    Now suppose that $Y \subset X$ is a transversal hyperplane section. If $Y$ is the
    zero locus of a section $\sigma \in H^{0}(\mathcal {O}_X(1)) \cong H^0(E)$, then, if $E'$ is the quotient
    in the exact sequence
    \[
    0 \to O_C \stackrel{\sigma}{\to}E \to E' \to 0
    \]
    one sees that $Y \subset \mathbb P(E')$ is the zero locus of a section $\sigma' \in H^0(\mathrm{Sym}^2(E')\otimes L)$
    and that the discriminant of $\sigma'$ is a section of $(\mathrm{det}\, E')^{\otimes 2} \otimes L^{\otimes 3}$;
    since the
    vanishing root system is $A_2$, \cite[Proposition 5.13]{Lvovski} implies that there are precisely
    $3$ degenerate fibers of the induced pencil $Y \to C$, so the degree of the
    invertible sheaf $(\mathrm{det}\,E^{'})^{\otimes 2} \otimes L^{\otimes 3}$ equals 3, whence
    \begin{gather}
        \label{second}
        2\,\mathrm{deg}\, E' +3\,\mathrm{deg}\, \mathcal L = 3
    \end{gather}
    Taking into account that $\mathrm{deg}\, E^{'} = \mathrm{deg}\, E$ and putting together equations \ref{second} and \ref{first},
    one sees that $\mathrm{deg}\, E = 6$ and $\mathrm{deg}\,L = -3$.

    Denote the embedding $i \colon X \hookrightarrow \mathbb P(E)$.
    We have $\mathcal O_{X}(1) = i^*(\mathcal O_{\mathbb P(E)}(2)+(p')^*(L))$ and
    \[
    \mathrm{deg}\, \mathcal O_{X}(1) = 2\,\mathrm{deg}\, (\mathcal O_{\mathbb P(E)}(1)) + \mathrm{deg}\,L = 2\mathrm{deg}\, E + \mathrm{deg}\, L = 12-3=9
    \]
    
    Now we can use the classification of degree nine varieties.
    It follows from \cite{Livorni} that pencils of quadrics (called 'hyperquardic fibrations' in this paper) of degree nine are fibrations over $\mathbb P^1$ (see the case 3.1.3 there).
    Hence $E \cong \mathcal O_{\mathbb P^1}(d_1) \oplus \mathcal O_{\mathbb P^1}(d_2) \oplus \mathcal O_{\mathbb P^1}(d_3) \oplus \mathcal O_{\mathbb P^1}(d_4)$ for some $0\leq d_1 \leq d_2 \leq d_3 \leq d_4$ satisfying $d_1+d_2+d_3+d_4=\mathrm{deg}\, E = 6$.
    The quadratic form defining $X\subset \mathbb P(E)$, which is a section of $\mathrm{Sym}^2\, E \otimes L$, can be represented as a $4\times 4$ matrix $(a_{ij})_{1\leq i, j \leq 4}$, where $a_{ij} \in H^0(d_i+d_j - 3)$.
    The case of $d_1=0$ is impossible. Indeed, if this is the
    case, then $a_{11}$ is identically zero, so in each fiber of the bundle $\mathbb P(E)$ the point
    with homogeneous coordinates $(1 : 0 : 0 : 0)$ lies in $X$ (we use homogeneous
    coordinates that agree with the decomposition $E = \bigoplus \mathcal O_{\mathbb P^1}(d_i)$). On the
    other hand, since $d_1 = 0$, the mapping $\varphi\colon \mathbb P(E) \to \mathbb P^n$ maps the points with
    coordinates $(1 : 0 : 0 : 0)$ in all the fibers of $\mathbb P(E)$ to one and the same point of $\mathbb P^n$.
    Thus, there exists a point contained in all the fibers of the pencil
    $p\colon X \to \mathbb P^1$, which is absurd.

    We have proved that $d_1 \neq 0$, whence $(d_1, d_2, d_3, d_4) = (1, 1, 2, 2)$ or $(1,1,1,3)$.
    But the second case cannot be realised, since any matrix of global sections of
    \begin{equation*}
        A=\begin{pmatrix}
            \mathcal O(-1)&\mathcal O(-1)&\mathcal O(-1)    &\mathcal O(1)   \\
            \mathcal O(-1)&\mathcal O(-1)&\mathcal O(-1)    &\mathcal O(1)  \\
            \mathcal O(-1)  &\mathcal O(-1)  &\mathcal O(-1)&\mathcal O(1)\\
            \mathcal O(1)   &\mathcal O(1)  &\mathcal O(1)&\mathcal O(3)\\
        \end{pmatrix}
    \end{equation*}
    is degenerate at any point $a \in \mathbb P^1$.
    Therefore, the first case holds, i.e., $E \cong \mathcal O_{\mathbb P^1}(d_1) \oplus \mathcal O_{\mathbb P^1}(d_2) \oplus \mathcal O_{\mathbb P^1}(d_3) \oplus \mathcal O_{\mathbb P^1}(d_4)$, $(d_1, d_2, d_3, d_4) = (1,1,2,2)$.
    Note that all matrices of global sections $a_{ij} \in H^0(\mathcal O_{\mathbb P^1}(d_i+d_j-3))$ for which $\mathrm{det}\, (a_{ij})$ is nonzero everywhere, actually define a pencil of quadrics with $2$-dimensional space of vanishing cycles.
    Indeed, since $\mathbb P^1$ is simply connected, the monodromy action of $\pi_1(C)$ on $H_2(X_a, \mathbb Q)$ is trivial; since the condition \ref{second} holds the pencil $Y \to C$ has exactly three degenerate fibers,
    thus by Proposition \cite[Proposition 5.13]{Lvovski} $X$ has $2$-dimensional space of vanishing cycles.
\end{proof}
\bibliography{veryamplepaper}
\bibliographystyle{amsplain}
\end{document}